\renewcommand{\phi}{\varphi}
\renewcommand{\emptyset}{\varnothing}
\renewcommand{\setminus}{-}
\newcommand{\CAT}{{\rm CAT$(0)$}}
\def\theenumi{\@roman\c@enumi}
\theoremstyle{plain}
\newtheorem*{NewPropositionA}{Proposition A}	
\newtheorem*{NewLemmaB}{Lemma B}	
\newtheorem*{NewTheoremC}{Theorem C}
\newtheorem{theorem}[subsection]{Theorem}
\newtheorem{lemma}[subsection]{Lemma}
\newtheorem{example}[subsection]{Example}
\newtheorem{proposition}[subsection]{Proposition}
\theoremstyle{definition}
\newtheorem{definition}[subsection]{Definition}
\newtheorem{remark}[subsection]{Remark}
\newtheorem*{theo}{Lemma 9.5 in Moussong's work}
\newenvironment{ftheo}
  {\begin{mdframed}\begin{theo}}
  {\end{theo}\end{mdframed}}
\newtheorem*{theo1}{Lemma 9.7 in Moussong's work}
\newenvironment{ftheo1}
  {\begin{mdframed}\begin{theo1}}
  {\end{theo1}\end{mdframed}}
  \newtheorem*{theo2}{Lemma 9.11 in Moussong's work}
\newenvironment{ftheo2}
  {\begin{mdframed}\begin{theo2}}
  {\end{theo2}\end{mdframed}}
\title[]{A note on almost negative matrices and Gromov-hyperbolic Coxeter groups}
\author{Philip M\"oller}
\date{\today}
\address{Philip M\"oller\\
	Department of Mathematics\\
	University of M\"unster\\ 
	Einsteinstra\ss e 62\\
	48149 M\"unster (Germany)}
\email{philip.moeller@uni-muenster.de}
\begin{document}
\begin{abstract} 
In this article we revisit Moussong's Characterization of Gromov-hyperbolic Coxeter groups. Moussong showed that a Coxeter group is Gromov-hyperbolic if and only if it does not contain a subgroup isomorphic to $\mathbb{Z}^2$. This property can be read off directly from the defining graph. We show that there is a gap in the original argument and provide a workaround.

\vspace{0.5cm}
\hspace{-0.6cm}
{\bf Key words.} \textit{Gromov-hyperbolic Coxeter groups, Almost negative Matrices, Polyhedral complexes, negative curvature.}	
\medskip

\medskip
\hspace{-0.5cm}{\bf 2010 Mathematics Subject Classification.} Primary: 20F55 ; Secondary: 51F15.
\end{abstract}

\thanks{This work is funded by the Studienstiftung des deutschen Volkes and by the Deutsche Forschungsgemeinschaft (DFG, German Research Foundation) under Germany's Excellence Strategy EXC 2044--390685587, Mathematics M\"unster: Dynamics-Geometry-Structure. This work is part of the PhD project of the author.}

\maketitle
\section{Introduction}
The class of Coxeter groups has been studied since their introduction in 1935 by Coxeter \cite{Coxeter}. For their definition we need a symmetric $n\times n$ matrix $M$ with entries in $\mathbb{N}_{>0}\cup\{\infty\}$ such that $m_{ii}=1$ and $m_{ij}\geq 2$ or $m_{ij}=\infty$ for $i\neq j\in\{1,...,n\}$. Given such a matrix, the corresponding Coxeter system $(W,S)$ is given by $S=\{s_1,s_2,...,s_n\}$ and
$$ W=\langle S| (s_{i}s_{j})^{m_{ij}}=1\quad \text{for all }i,j\in {1,...,n}\text{ such that } m_{ij}\neq \infty \rangle$$
In 1935 Coxeter was able to classify the finite and affine Coxeter groups in \cite{Coxeter}. More work e.g. about the word problem was done in the 60s and 70s by J. Tits, e.g. in \cite{Tits}. Coxeter groups turned out to be very `geometric' objects, which are for example connected to the theory of buildings. Coxeter groups have been studied since their introduction from a geometric and an algebraic point of view.

In the 1980s, Gromov basically invented the branch of mathematics called \textit{geometric} group theory. The idea is to study groups via their actions on spaces with nice properties. Two very important types of spaces are the so called \CAT\ spaces and the $\delta$-hyperbolic spaces. A geodesic metric space is called \CAT\ if geodesic triangles are not thicker than Euclidean comparison triangles. A geodesic metric space is called $\delta$-hyperbolic for a real number $\delta\geq 0$ if for every geodesic triangle, a $\delta$-neighborhood of two sides contains the third. For the remainder of this article we will call a $\delta$-hyperbolic space Gromov-hyperbolic since the actual value of $\delta$ plays no role. Two standard references for geometric group theory are Gromov's original work \cite{Gromov} and the book by Bridson and Haefliger \cite{BH}.

A group is called \CAT \  (resp. Gromov-hyperbolic) if it acts geometrically, that is properly discontinuously and cocompactly via isometries on a \CAT \ (resp. Gromov-hyperbolic) space (see e.g. \cite{Gromov}). Being \CAT\ or Gromov-hyperbolic has strong implications for the structure of the group, see e.g. \cite[Thm I.4.1]{Davis}. By now we also know many examples (and non-examples) of \CAT\ and Gromov-hyperbolic groups. Some examples of CAT$(0)$ groups are Coxeter groups, right-angled Artin groups, Graph products of finite vertex groups and many more. Examples of Gromov-hyperbolic groups are free groups, Coxeter groups without a subgroup isomorphic to $\mathbb{Z}^2$ and more. Additional examples and information on these groups can be found in books on geometric group theory such as \cite{DK}.

During the time of Moussong's thesis however, not so many examples of these groups were known, and Coxeter groups were natural candidates to study with regard to these properties, since they are ``reflection'' groups. It turns out that in fact all Coxeter groups have the \CAT\ property by \cite[Thm. A]{Moussong}. The most important tool in Moussong's thesis is ``Moussong's Lemma'' \cite[Lemma II.5.21]{BH}, a technical result about metric properties of spherical complexes with ``large'' edge lengths. 

 The Gromov-hyperbolic case is a bit more difficult, since a Gromov-hyperbolic group cannot contain a subgroup isomorphic to $\mathbb{Z}^2$, some Coxeter groups cannot have this property. However for the Coxeter group case, this is the only restriction, i.e. a Coxeter group is Gromov-hyperbolic if and only if it does not contain a subgroup isomorphic to $\mathbb{Z}^2$ \cite[Thm. B]{Moussong}. This can be read off the defining graph or matrix easily. However it turns out that there is a gap in the proof of \cite[Thm. B]{Moussong}, which we will discuss here. More precisely Lemmas 9.5, 9.7 and 9.11 in \cite{Moussong} are not quite correct and the latter ones depends on the former ones. Lemma 9.11 plays a major role in the proof of Theorem B.
 
 We want to point out that the entire CAT$(0)$ case is problem-free, this includes \cite[Thm. A]{Moussong} and Moussong's Lemma.
 
 To circumvent the problems caused by these lemmas we will have to investigate almost negative matrices and their nerve complexes which were very important tools in Moussong's argument. A symmetric $n\times n$-matrix is called almost negative if all entries are real numbers and all off-diagonal entries are non-positive. The most important example is the cosine matrix of a Coxeter system, that is, the $n\times n$ matrix whose $(i,j)$ entry is $-\cos(\pi/m_{ij})$ if $m_{ij}<\infty$ and $-1$ else. We will briefly revisit the original argument in Section 3 and explain why these matrices play such a key role. Very importantly, one can define a \textit{link}-matrix, which gives an algebraic construction closely related to the geometric link in the nerve complex. More precisely the link matrix is obtained by the following construction: We start with an almost negative $n\times n$ matrix $A$ and a subset $I\subseteq\{1,...,n\}$ such that the matrix $A_I:=(a_{ij})_{i,j\in I}$ is positive definite. Further let $e_i$ denote the $i$-th standard basis vector and let $\phi\colon \mathbb{R}^n\to U^\perp:=\left\{v\in \mathbb{R}^n| v^TAe_i=0\ \forall \ i\in I\right\}$ denote the orthogonal projection with regard to the symmetric bilinear form induced by $A$ on $\mathbb{R}^n$. Note that this is well-defined, because of a slight modification of the Gram-Schmidt procedure, where one subtracts a linear combination of vectors in $U$. The link-matrix of $A$ with respect to $I$ is the $J\times J$ matrix $lk(I,A)=(b_{st})_{st \in J}$ where $J=\{1,...,n\}-I$ and $b_{st}=\phi(e_s)^TA\phi(e_t)$.
 
 We prove the following proposition to locate the precise problem in the original argument.
 \begin{NewPropositionA}
 	Lemmas 9.5, 9.7 and 9.11 in \cite{Moussong} are correct if the almost negative matrix in question has no link-matrix which has a principal submatrix of order $\geq 2$ which is parabolic or has a row or column consisting of zeroes. There exist counterexamples to the general statements.
 \end{NewPropositionA}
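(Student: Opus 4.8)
The statement splits into a positive half — the three lemmas hold once the extra hypothesis is imposed — and a negative half — counterexamples exist without it — and I would attack the two separately. For the positive half the plan is to reconstruct Moussong's proofs of Lemmas 9.5, 9.7 and 9.11 in \cite{Moussong} in full and, in each, to isolate the single inference that is not actually justified, and then to check that the hypothesis on link-matrices is exactly what repairs it. The organising tool throughout is the \emph{transitivity of the link-matrix construction}: for disjoint subsets $I,I'$ with $A_I$ and $A_{I\cup I'}$ positive definite there is a canonical identification $lk(I\cup I',A)=lk\!\left(I',lk(I,A)\right)$, and for $K\supseteq I$ the submatrix $A_K$ is positive definite iff the corresponding principal submatrix of $lk(I,A)$ is. I would establish this first, because it makes the hypothesis \emph{self-propagating}: a link-matrix of a link-matrix of $A$ is again a link-matrix of $A$, so if no link-matrix of $A$ has a forbidden principal submatrix, then neither does any link-matrix appearing in the inductions below.

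I would begin with Lemma 9.5, the base case, which is the purely linear-algebraic assertion about $lk(I,A)$ (almost negativity, plus the behaviour of positive-definiteness and of the radical under the construction). The gap appears where Moussong's argument handles the projected configuration $\{\phi(e_s)\}_{s\in J}$ as though the bilinear form it induces were non-degenerate in the relevant directions — as though the passage to the nerve were ``strict''. This is precisely what fails when a link-matrix acquires a degenerate sub-configuration of one of the two types named in the hypothesis: a principal submatrix of order $\geq 2$ with a zero row or column (an isotropic projected basis vector paired trivially with another), or a parabolic principal submatrix of order $\geq 2$ (a positive semidefinite but singular relative block, in the simplest case a rank-one $2\times 2$ matrix). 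Excluding these — which the hypothesis does for $A$, hence by transitivity for every link-matrix that occurs — lets Moussong's proof of Lemma 9.5 stand verbatim.

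Lemma 9.7 — the dictionary identifying the link of the simplex $\sigma_I$ in the nerve $L(A)$, with its Moussong metric, with the nerve $L(lk(I,A))$ of the link-matrix — is then proved by a double induction on $|I|$ and on the order of $A$, feeding in Lemma 9.5 and the transitivity identity at the inductive step; since the hypothesis descends to every link-matrix that appears, this induction closes. Lemma 9.11, the statement that actually enters the proof of Theorem B, finally follows from 9.5 and 9.7 together with Moussong's Lemma, and here no new phenomenon arises: one only has to verify that the hypothesis survives the reductions made in Moussong's deduction, which is again immediate from transitivity, after which the original proof is valid.

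For the negative half I would produce explicit small almost negative matrices — preferably cosine matrices of genuine Coxeter systems, so that the failure bears directly on Theorem B — in which some link-matrix has a $2\times2$ parabolic block (an ``affine'' relative configuration such as $\tilde A_1$, producing an edge of length $\pi$ inside a link) or a zero row, and for which the conclusion of Lemma 9.5 or of Lemma 9.7 is literally false; propagating such an example along the chain, or directly exhibiting a Coxeter group on a handful of generators whose Davis complex contains a flat plane coming from an affine configuration hidden inside a link and therefore invisible to the literal statement of Lemma 9.11, shows that the general form of 9.11 fails as well. The main obstacle is the positive half: Moussong's Section~9 is very terse, so pinning down the precise inferential gap — and, more delicately, confirming that the single added hypothesis genuinely suffices at \emph{every} node of the two interlocking inductions rather than only at the top level — requires rebuilding the arguments essentially from scratch; by contrast, minimising the counterexamples and checking that the lemmas themselves, and not merely Moussong's proofs of them, fail is laborious but routine.
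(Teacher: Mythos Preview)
Your proposal rests on a misidentification of what Lemmas 9.5, 9.7 and 9.11 in \cite{Moussong} actually assert. You describe 9.5 as ``the purely linear-algebraic assertion about $lk(I,A)$ (almost negativity, plus the behaviour of positive-definiteness and of the radical under the construction)'' and 9.7 as ``the dictionary identifying the link of the simplex $\sigma_I$ in the nerve $L(A)$ \ldots\ with the nerve $L(lk(I,A))$.'' Neither is correct. Lemma 9.5 is the statement that for $u\in\mathbb{R}^n$ with nonnegative coordinates and $\langle u,u\rangle=1$ there exists $z\in N(A)$ with $\langle u,z\rangle\geq 1$, strict if $A$ has no principal parabolic submatrices. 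Lemma 9.7 is the metric inequality $d(x,y)<\cos^{-1}\langle x,y\rangle$ for $x,y\in N(A)$ not lying in a common simplex. Lemma 9.11 compares the intrinsic distance in $St(v,N)\setminus Ost(v,N)$ with that in the suspension $Slk(v,N)$. All three are concrete metric statements about points in the nerve complex; none of them is the link-matrix formalism you describe (that material is elsewhere in Moussong, recorded here as Lemma~\ref{1.3}). Because your plan is organised around the wrong statements, the analysis of ``where the gap lies'' --- projected configurations, non-degeneracy of the induced form --- does not touch the actual failure, which in 9.5 is the single line where Moussong asserts $\langle w,w\rangle<0$ for a specific vector $w$ in his decomposition $u=\sum\alpha_w w$.

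The paper proceeds quite differently: for each of the three lemmas it first writes down an explicit small counterexample (the $2\times2$ matrix $\bigl(\begin{smallmatrix}1&0\\0&0\end{smallmatrix}\bigr)$ with $u=(1,1)$ for 9.5; a specific $3\times3$ matrix with named points $x,y$ for 9.7; a $4\times4$ matrix for 9.11), then locates the exact line in Moussong's argument that breaks, and then re-runs Moussong's own proof under the extra hypothesis, in 9.5 by a short case analysis on which $\alpha_w\neq0$ and in 9.11 by supplying a missing argument for the case that the geodesic from $x$ to $y$ stays in $St(v,N)$. Your transitivity observation $lk(I\cup I',A)=lk(I',lk(I,A))$ is correct and is indeed used (it is Lemma~\ref{1.3} here), but it is a supporting tool, not the mechanism that repairs the proofs. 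To fix your proposal you would first need to read off the genuine statements of 9.5, 9.7 and 9.11 and then carry out the line-by-line repair; the counterexamples also need to be exhibited explicitly, not merely promised.
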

 In these problematic cases we prove the following technical lemma
 \begin{NewLemmaB}
 	Let $A$ denote an almost negative matrix such that one of its link matrices has a row or column consisting entirely of zeroes. Then $A$ splits as a direct sum $A=B\oplus C$.
 \end{NewLemmaB}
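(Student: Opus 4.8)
The plan is to manufacture, out of the vanishing row, a nonzero entrywise nonnegative vector in the radical of $A$, and then to use almost-negativity (a sign argument of Perron--Frobenius type) to read off a block decomposition. So suppose $B=lk(I,A)=(b_{st})_{s,t\in J}$, with $J=\{1,\dots,n\}-I$ and $A_I$ positive definite, has a row indexed by some $s_0\in J$ consisting of zeroes, i.e.\ $b_{s_0 t}=0$ for all $t\in J$; the case of a vanishing column is identical since $B$ is symmetric. Put $v:=\phi(e_{s_0})\in U^\perp$. First I would show that $v$ lies in the radical of $A$, that is $Av=0$. On the one hand, $v\in U^\perp$ gives $v^TAe_i=0$ for every $i\in I$. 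On the other hand, by the description of $\phi$ we may write $\phi(e_t)=e_t-u_t$ with $u_t$ in $U$ (the span of the $e_i$ with $i\in I$); since $v\in U^\perp$ this yields $v^TAu_t=0$, and hence $0=b_{s_0 t}=v^TA\phi(e_t)=v^TAe_t$ for every $t\in J$. As $\{1,\dots,n\}=I\cup J$, we get $v^TAe_k=0$ for all $k$, i.e.\ $Av=0$ by symmetry.

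The second step carries essentially all the content: verifying that $v$ is nonnegative. Write $v=e_{s_0}-\sum_{i\in I}c_ie_i$; the equations $v^TAe_i=0$ for $i\in I$ translate into $A_I c=(a_{s_0,i})_{i\in I}$, where $c=(c_i)_{i\in I}$. Now $A_I$ is positive definite by hypothesis and has nonpositive off-diagonal entries because $A$ is almost negative, so $A_I$ is a symmetric Stieltjes matrix and its inverse is entrywise nonnegative. The right-hand side $(a_{s_0,i})_{i\in I}$ is entrywise nonpositive (again by almost-negativity, since $s_0\notin I$), so $c=A_I^{-1}(a_{s_0,i})_{i\in I}$ is entrywise nonpositive; therefore $v=e_{s_0}+\sum_{i\in I}(-c_i)e_i$ is entrywise nonnegative. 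Moreover $v\neq 0$, because its $s_0$-coordinate equals $1$.

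Finally I would extract the splitting from the facts $Av=0$, $v\ge 0$ and $v\neq 0$. Let $\sigma=\{k:v_k>0\}$, a nonempty set contained in $\{s_0\}\cup I$. For $k\notin\sigma$ the identity $0=(Av)_k=\sum_{j\in\sigma}a_{kj}v_j$ is a sum of terms $a_{kj}v_j$, each of which is $\le 0$ (here $j\neq k$, so $a_{kj}\le 0$ by almost-negativity, while $v_j>0$); hence every term vanishes and $a_{kj}=0$ whenever $j\in\sigma$ and $k\notin\sigma$. Since $B$ has order at least $2$, there is some $t\in J$ with $t\neq s_0$, and then $t\notin\{s_0\}\cup I$ forces $t\notin\sigma$; thus $\sigma$ is a proper nonempty subset of $\{1,\dots,n\}$. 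Reordering the indices so that those in $\sigma$ come first now exhibits $A$ as the direct sum of the principal submatrices $A_\sigma$ and $A_{\{1,\dots,n\}-\sigma}$, which is the desired decomposition $A=B\oplus C$.

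I expect the only real difficulty to be the nonnegativity in the middle step: everything hinges on the radical vector being forced to have a single sign, and this is precisely where positive-definiteness of $A_I$ and almost-negativity of $A$ must be combined, via the classical fact that the inverse of a Stieltjes matrix is nonnegative. The first and third steps are then routine bookkeeping with the definition of the link matrix plus a standard sign argument. The one point that needs care is the degenerate case $|J|=1$: a $1\times1$ link matrix can equal $(0)$ without $A$ decomposing --- for example the cosine matrix of the affine Coxeter group $\widetilde A_2$ satisfies $lk(\{1,3\},A)=(0)$ yet is irreducible --- so the hypothesis must be read, as it implicitly is in Proposition A, as concerning a zero row or column of a link matrix of order at least $2$; with $|J|\ge 2$ the third step genuinely produces a proper block.
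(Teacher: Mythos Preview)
Your argument is correct and takes a genuinely different route from the paper's. The paper proves the detailed version (its Lemma~4.2, which includes the parabolic alternative) by first deriving the one-step link formula $d_{ij}=c_{ij}-c_{1i}c_{1j}/c_{11}$ and then, in Step~2, peeling off elements of $I$ one at a time: at each stage the almost-negativity forces the preimage of a reducible link to be reducible again (with the extra index joining one of the two blocks), and iterating this down to $A$ gives the block decomposition. Step~3 then treats the $1\times1$ link separately and produces the parabolic alternative. Your approach bypasses the induction entirely: you build the radical vector $v=\phi(e_{s_0})$ in one shot, invoke the classical fact that a symmetric positive definite matrix with nonpositive off-diagonal entries (a Stieltjes matrix) has entrywise nonnegative inverse to see $v\ge0$, and read off the splitting from the support of $v$ via a Perron--Frobenius--type sign argument.

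What each approach buys: the paper's iterative method is entirely self-contained and gives explicit control over how the blocks grow as one unlinks; it also dovetails naturally with Step~3, where the same formula handles the degenerate $1\times1$ link and yields the parabolic conclusion. Your argument is shorter and more conceptual, and it makes transparent \emph{why} reducibility is forced---it is a positivity phenomenon, not a combinatorial accident. You correctly flag that the $|J|=1$ case is not covered by your splitting argument (since then the support of $v$ may be all of $\{1,\dots,n\}$), and indeed the paper's precise statement carries the disjunction ``parabolic or reducible'' precisely to absorb this case; your reading of the hypothesis as $|J|\ge2$ matches the paper's Step~2 exactly.
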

 This will be the main ingredient to fill the gap in the proof of \cite[Thm. B]{Moussong}. To do so we provide an alternative proof of \cite[Lemma 10.3]{Moussong} in the potentially problematic cases, which turns out to be sufficient to complete the proof. 
 \begin{NewTheoremC}
 	Let $(W,S)$ denote a Coxeter system. The following are equivalent:
 	\begin{enumerate}
 		\item $W$ is Gromov-hyperbolic.
 		\item $W$ contains no subgroup isomorphic to $\mathbb{Z}^2$.
 		\item There is no subset $T\subseteq S$ such that $(W_T,T)$ is an affine Coxeter system of rank $\geq 3$ and there is no pair of disjoint subsets $T_1,T_2\subseteq S$ such that $W_{T_1}$ and $W_{T_2}$ are infinite and commute.
 	\end{enumerate}
 \end{NewTheoremC}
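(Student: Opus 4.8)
The plan is to establish the cycle of implications $(i)\Rightarrow(ii)\Rightarrow(iii)\Rightarrow(i)$, of which only the last arrow uses the new material. For $(i)\Rightarrow(ii)$: since $W$ acts geometrically on a Gromov-hyperbolic space, it is a Gromov-hyperbolic group, and in any such group the centralizer of an element of infinite order is virtually cyclic (see, e.g., \cite{BH}); a subgroup isomorphic to $\mathbb{Z}^2$ would be contained in the centralizer of each of its (infinite-order) elements, which is impossible. For $(ii)\Rightarrow(iii)$ we prove the contrapositive: if there is $T\subseteq S$ with $(W_T,T)$ affine of rank $n\geq 3$, then $W_T$ has a finite-index subgroup isomorphic to $\mathbb{Z}^{n-1}$ and hence contains $\mathbb{Z}^2$; and if $T_1,T_2\subseteq S$ are disjoint with $W_{T_1}$, $W_{T_2}$ infinite and commuting elementwise, then $st=ts$ with $s\neq t$ for all $s\in T_1$, $t\in T_2$ forces $m_{st}=2$, so $W_{T_1\cup T_2}=W_{T_1}\times W_{T_2}$, which contains $\mathbb{Z}\times\mathbb{Z}$ since every infinite Coxeter group contains an element of infinite order (for instance, the Coxeter element of an infinite irreducible factor). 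In either case $W$ contains $\mathbb{Z}^2$.

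For $(iii)\Rightarrow(i)$: by \cite[Thm.~A]{Moussong} --- whose proof, and Moussong's Lemma, are unaffected by the gap at issue --- $W$ acts geometrically on the piecewise Euclidean Davis--Moussong complex $\Sigma$, which is \CAT, so by the Flat Plane Theorem for cocompact \CAT\ spaces (see, e.g., \cite{BH}) it suffices to show that $\Sigma$ contains no isometrically embedded flat plane. Suppose it does; we shall produce a subset of $S$ witnessing the failure of $(iii)$, a contradiction. (Only the direction ``flat plane $\Rightarrow\neg(iii)$'' is needed, its converse being subsumed by the cycle.) Moussong's treatment of a flat plane in $\Sigma$ (Section~10 of \cite{Moussong}) runs through the combinatorics of the cosine matrices of parabolic subgroups $W_{T'}$ and their iterated link matrices, and culminates in \cite[Lemma~10.3]{Moussong}, which outputs exactly such a subset; this rests on Lemmas~9.5, 9.7 and 9.11 of \cite{Moussong}. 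By Proposition~A the argument is sound unless one of the cosine matrices encountered has a link matrix with a principal submatrix of order $\geq 2$ that is parabolic, or with a row or column of zeroes; if no such matrix arises, we are done.

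It remains to dispatch these two exceptional configurations directly, which is where Lemma~B enters. If a link matrix of the relevant cosine matrix $A$ has a zero row or column, Lemma~B gives an orthogonal splitting $A=B\oplus C$; since $A$ is a cosine matrix, the Coxeter diagram of the corresponding $W_{T'}$ disconnects, so $W_{T'}=W_{T_1}\times W_{T_2}$ with $W_{T_1}$, $W_{T_2}$ standard parabolics, and one checks --- using how $T'$ was produced from the flat plane --- that both factors are infinite; hence $(T_1,T_2)$ witnesses the failure of $(iii)$. If instead a link matrix of $A$ has a parabolic principal submatrix of order $\geq 2$, it corresponds to a standard parabolic of $W_{T'}$ with parabolic cosine matrix, i.e.\ an affine standard parabolic whose rank --- the order of that submatrix plus the size of the spherical set defining the link matrix --- is $\geq 3$ in all but a degenerate low-rank case, which one then handles by splitting off a commuting pair of infinite parabolics. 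In every case we obtain the required witness, contradicting $(iii)$; this re-establishes \cite[Lemma~10.3]{Moussong} in the problematic cases and completes the proof.

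The step I expect to be the main obstacle is this last one: verifying that the dichotomy of Proposition~A genuinely exhausts the failure modes of Moussong's argument, and that in each exceptional case the algebraic data extracted can be promoted to a subset of $S$ of exactly the form demanded by $\neg(iii)$ --- in particular, that \emph{both} factors of the splitting supplied by Lemma~B are infinite Coxeter groups, and that a parabolic principal submatrix of order $\geq 2$ really produces a \emph{standard} affine parabolic of rank $\geq 3$ rather than a rank-$2$ infinite-dihedral piece or a mere sub-quotient. Keeping careful track of which parabolic $W_{T'}$ a given iterated link matrix encodes, and of how forming a link matrix mirrors passing to a link in the nerve complex of $\Sigma$, is where the real work lies.
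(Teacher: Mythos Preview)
Your outline of the cycle $(i)\Rightarrow(ii)\Rightarrow(iii)$ is fine and matches the standard argument. The trouble is entirely in your treatment of $(iii)\Rightarrow(i)$, and specifically in the two ``exceptional'' paragraphs.

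First, the paper does \emph{not} argue via the Flat Plane Theorem; it follows Moussong in showing that under $(iii)$ one has the \emph{strict} girth inequality $g(N(A))>2\pi$ for all relevant links, whence $\Sigma$ can be rebuilt as a CAT$(-1)$ complex. The key step is therefore \cite[Lemma~10.3]{Moussong}: if $g(N(A))=2\pi$ then $A$ contains a principal parabolic submatrix of order $\geq 3$ or a reducible principal submatrix $A_1\oplus A_2$ with \emph{neither} $A_i$ positive definite. The paper re-proves exactly this statement in the problematic cases by analysing a closed geodesic $q$ of length $2\pi$ in $N(A)$, passing through the star/link of a distinguished vertex $v_1$ and splitting into carefully argued subcases; Lemma~B is used only to show that the exceptional hypotheses force $A$ to contain one of two explicit $3\times 3$ submatrices, after which the argument is geometric and inductive on the order of $A$.

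Your shortcut attempts instead to read off a witness for $\neg(iii)$ directly from the algebraic exceptional configuration, and both branches fail. When Lemma~B yields $A=B\oplus C$, nothing forces \emph{both} blocks to be non--positive-definite; one block may well be a $1\times 1$ positive entry, and ``using how $T'$ was produced from the flat plane'' is precisely the missing argument. More seriously, a parabolic principal submatrix of a \emph{link matrix} $lk(I,A)$ is \emph{not} the cosine matrix of any standard parabolic of $W$: the entries of $lk(I,A)$ are $c_{ij}-c_{1i}c_{1j}/c_{11}$ (iterated), not $-\cos(\pi/m_{ij})$, so your sentence ``it corresponds to a standard parabolic of $W_{T'}$ with parabolic cosine matrix'' is false as stated. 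The paper avoids this by never interpreting link matrices as cosine matrices; it instead shows (Step~3 of the proof of Lemma~B) that a parabolic link matrix forces $A$ itself to be parabolic or reducible, and then, in the proof of Lemma~10.3, uses the closed geodesic of length $2\pi$ to pin down that the reducible piece has two non--positive-definite blocks. That geodesic analysis is the content you are missing.
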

To our knowledge, there is no proof of this result in the literature which does not depend on the incorrect lemmas of Moussong's work. For example in \cite{Davis} the dependence comes from Lemma I.7.6, which is precisely \cite[Lemma 10.3]{Moussong}.

The paper is organized as follows: In Section 2 we will review the basic definitions, in Section 3 we will recall the structure of the original argument and prove Proposition A and in Section 4 we will prove Lemma B and Theorem C.\\

{\bf Acknowledgement.} The author thanks Olga Varghese and Linus Kramer for many useful comments and remarks about an earlier version of this manuscript, as well as for their encouragement to finish this project. The author also wants to thank the anonymous referee for their detailed report with many useful comments and remarks. Finally the author wants to express his gratitude to Lara Bessmann for a multitude of useful comments on an earlier version of this article.
\section{Almost negative matrices, nerve-complexes and the Davis complex}
As was already stated in the introduction, an \textit{almost negative matrix} is a symmetric $n\times n$ matrix with real entries such that all off-diagonal entries are non-positive. We say that an $n\times n$-matrix has \textit{order} $n$.
We now define a complex using an almost negative matrix, which will be a spherical cell complex, an $M_1$ complex in the language of \cite{BH}. Let $A=(a_{ij})_{i,j\in\{1,...,n\}}$ denote an almost negative $n\times n$ matrix and $I\subseteq\{1,...,n\}$ such that $A_I$ is positive definite. Let $B_I$ denote the spherical simplex in $\mathbb{S}^{|I|+1}$ defined by the linear span of $\{e_i\}_{i\in I}$ with respect to the positive definite form induced by $A$. That means $B_I$ is the simplex with vertices $v_i=\frac{e_i}{\sqrt{e_i^{t}Ae_i}}$ for $i\in I$ and $B_I$ carries a natural metric induced by the positive definite form induced by $A$. The \textit{nerve complex} $N(A)$ is then defined as the cell complex obtained by the union of all cells $B_I$, where two simplices $B_I$ and $B_J$ are glued together along $B_{I\cap J}$. In the context of cell complexes, links are very important. In the context of almost negative matrices and nerve complexes, it is possible to `compute the link algebraically' using the link matrix defined in the introduction. More precisely we have:
\begin{lemma}(\cite[p. 21]{Moussong})\label{1.3}
	Let $A$ be an almost negative $n\times n$ matrix and $I\subseteq \{1,...,n\}$, then the link of the simplex $B_I$ in $N(A)$ (see \cite[I.7.14]{BH} for the precise definition), $lk(B_I,N(A))$, is isometric to the nerve of the link matrix $N(lk(I,A))$. Furthermore we have $lk(I,A)=lk\left(\{i\},lk\left(I\setminus\{i\},A\right)\right)$ for all $i\in I$. Moreover $lk(I,A)$ is also an almost negative matrix.
\end{lemma}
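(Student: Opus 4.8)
The plan is to translate each assertion into linear algebra on $(\mathbb{R}^n,\langle\,\cdot\,,\cdot\,\rangle_A)$, where $\langle u,v\rangle_A:=u^{T}Av$. For $R\subseteq\{1,\dots,n\}$ write $U_R:=\operatorname{span}\{e_r\mid r\in R\}$ and $U_R^{\perp}:=\{v\mid\langle v,e_r\rangle_A=0\ \forall r\in R\}$, and let $\phi_R$ be the projection onto $U_R^{\perp}$, so the $\phi$ of the introduction is $\phi_I$. Since $A_I$ is positive definite the form is nondegenerate on $U_I$, hence $\mathbb{R}^n=U_I\oplus U_I^{\perp}$ is an $A$-orthogonal direct sum, $\phi_I$ is a genuine linear projection with kernel $U_I$, and $\phi_I(v)$ is the unique element of $(v+U_I)\cap U_I^{\perp}$; the same holds for any $R$ with $A_R$ positive definite. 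The first computational input is the shape of $\phi_I(e_s)$ for $s\in J$: writing $\phi_I(e_s)=e_s-\sum_{i\in I}c^s_ie_i$, the condition $\phi_I(e_s)\in U_I^{\perp}$ reads $A_I(c^s_i)_{i\in I}=(a_{is})_{i\in I}$, so $(c^s_i)_i=A_I^{-1}(a_{is})_i$. Using the classical fact that a positive definite matrix with non-positive off-diagonal entries has an entrywise non-negative inverse (e.g. $A_I^{-1}=\int_0^{\infty}e^{-tA_I}\,dt$ and $e^{-tA_I}=e^{-ct}\exp\!\big(t(cI-A_I)\big)$ is entrywise non-negative once $c$ exceeds every diagonal entry of $A_I$), together with the fact that $(a_{is})_{i\in I}$ is entrywise non-positive since $i\neq s$, we get $c^s_i\le 0$; thus $\phi_I(e_s)$ has all entries $\ge 0$ and its $s$-th entry equal to $1$.

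Almost-negativity of $lk(I,A)$ is then immediate: for $s\neq t$ in $J$, $b_{st}=\langle\phi_I(e_s),\phi_I(e_t)\rangle_A=\langle\phi_I(e_s),e_t\rangle_A$ (because $e_t-\phi_I(e_t)\in U_I$ is $A$-orthogonal to $\phi_I(e_s)$), and substituting $\phi_I(e_s)=e_s-\sum_{i\in I}c^s_ie_i$ gives $b_{st}=a_{st}+\sum_{i\in I}(-c^s_i)a_{it}$, a sum of non-negative multiples of the non-positive numbers $a_{st}$ ($s\neq t$) and $a_{it}$ ($i\in I$, $t\in J$, $i\neq t$); hence $b_{st}\le 0$. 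Next I would record which principal submatrices detect a cell: for $P\subseteq J$ and $K:=I\cup P$, the $A$-orthogonal splitting $U_K=U_I\oplus(U_I^{\perp}\cap U_K)$, in which $\{\phi_I(e_p)\mid p\in P\}$ is a basis of $U_I^{\perp}\cap U_K$ (it spans, and $|P|=\dim U_K-\dim U_I$ because $A_I$ is nondegenerate), shows that $A_K$ is positive definite iff $A_I$ is and the Gram matrix $(b_{pq})_{p,q\in P}=lk(I,A)_P$ is; so the cells $B_K$ of $N(A)$ with $I\subseteq K$ correspond bijectively to the cells $B_P$ of $N(lk(I,A))$ via $P=K\setminus I$.

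For the isometry $lk(B_I,N(A))\cong N(lk(I,A))$ I would compare cells individually and then the gluings. By the standard description of the link of a face of a spherical simplex (\cite[I.7]{BH}), $lk(B_I,B_K)$ is the spherical simplex in the unit sphere of $(U_I^{\perp}\cap U_K,A)$ whose vertices are the normalised projections $\phi_I(e_p)/\sqrt{b_{pp}}$, $p\in P=K\setminus I$; the Gram matrix of these unit vectors is $\big(b_{pq}/\sqrt{b_{pp}b_{qq}}\big)_{p,q\in P}$, which is exactly the Gram matrix of unit vertices of the cell $B_P$ of $N(lk(I,A))$. Hence $e_p\mapsto\phi_I(e_p)/\sqrt{b_{pp}}$ induces a cell-by-cell isometry, and since on both sides $B_P$ and $B_{P'}$ are glued along $B_{P\cap P'}$, these patch to a global isometry. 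I expect this to be the main obstacle: one has to check carefully that the metric which $lk(B_I,B_K)$ carries as a link in the sense of \cite{BH} really is the round metric of the projected spherical simplex just described, and that the cell correspondence of the previous paragraph is compatible with all face-gluings; everything else is formal.

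It remains to prove $lk(I,A)=lk\big(\{i\},lk(I\setminus\{i\},A)\big)$ for $i\in I$, which I would obtain directly from the behaviour of orthogonal projection under nesting of subspaces (invoking that ``links compose'' only recovers the matrix up to rescaling of the basis vectors, which is not enough). Put $I':=I\setminus\{i\}$ and $J':=\{1,\dots,n\}\setminus I'$. The map $e_s\mapsto\phi_{I'}(e_s)$, $s\in J'$, is a bijective isometry from $(\mathbb{R}^{J'},\langle\,\cdot\,,\cdot\,\rangle_{lk(I',A)})$ onto $(U_{I'}^{\perp},A)$ — bijective by dimension count, an isometry by the very definition of the link matrix. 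Under it, passing to $lk(\{i\},lk(I',A))$, i.e. projecting $A$-orthogonally inside $U_{I'}^{\perp}$ off the vector $\phi_{I'}(e_i)$, corresponds to the $A$-orthogonal projection $q$ of $U_{I'}^{\perp}$ onto $U_{I'}^{\perp}\cap\phi_{I'}(e_i)^{\perp_A}$. One checks $U_{I'}^{\perp}\cap\phi_{I'}(e_i)^{\perp_A}=U_I^{\perp}$ (for $v\in U_{I'}^{\perp}$ one has $\langle v,\phi_{I'}(e_i)\rangle_A=\langle v,e_i\rangle_A$) and that $q\circ\phi_{I'}=\phi_I$ (it maps into $U_I^{\perp}$, and since $U_{I'}+\mathbb{R}\phi_{I'}(e_i)=U_I$ it has the form $\mathrm{id}-(\text{a map into }U_I)$, hence equals $\phi_I$ by the uniqueness recorded above). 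Reading off matrix entries, $lk(\{i\},lk(I',A))_{st}=\langle\phi_I(e_s),\phi_I(e_t)\rangle_A=b_{st}$ for all $s,t\in J$, as claimed; this also yields an alternative inductive proof that $lk(I,A)$ is almost negative.
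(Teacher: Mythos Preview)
The paper does not prove this lemma; it is quoted from Moussong's thesis, and the only related computation the paper carries out is the explicit single-index formula $d_{ij}=c_{ij}-c_{1i}c_{1j}/c_{11}$ in Step~1 of Lemma~\ref{split}, which it then iterates by invoking the very identity $lk(J,A)=lk(\{j\},lk(J\setminus\{j\},A))$ from the present lemma. So there is no ``paper's own proof'' to compare against beyond that fragment.

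Your argument is correct and self-contained. The treatment of almost-negativity via the Stieltjes/M-matrix fact that $A_I^{-1}$ is entrywise non-negative is clean and avoids the inductive computation one would otherwise do with the single-index formula; the cell correspondence $K\leftrightarrow K\setminus I$ via the $A$-orthogonal splitting $U_K=U_I\oplus(U_I^{\perp}\cap U_K)$ is exactly the right bookkeeping for the isometry claim; and your derivation of $lk(I,A)=lk(\{i\},lk(I\setminus\{i\},A))$ as an honest equality of matrices (not merely up to diagonal rescaling) via $q\circ\phi_{I'}=\phi_I$ is the crucial point, and you handle it correctly by checking $U_{I'}^{\perp}\cap\phi_{I'}(e_i)^{\perp_A}=U_I^{\perp}$ and $U_{I'}+\mathbb{R}\phi_{I'}(e_i)=U_I$. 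Compared with the paper's purely computational approach (which only ever handles $|I|=1$ explicitly and defers the rest to Moussong), your projection-geometric route is more conceptual and yields all three assertions uniformly; the paper's explicit formula, on the other hand, is what is actually used in the later sign-chasing arguments of Lemma~\ref{split}, so both viewpoints earn their keep. Your caveat about verifying that the Bridson--Haefliger link metric on $lk(B_I,B_K)$ agrees with the round metric on the projected spherical simplex is well placed; this is the standard identification of the link of a face of a spherical simplex with the unit sphere in the $A$-orthogonal complement of its span, and once stated it is routine.
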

The main application for these matrices and links is the \textit{Davis complex with the Moussong metric} for Coxeter groups. Let us briefly recall the definition of this fundamental object of study. We first discuss the \textit{Davis complex}, for more material see \cite[Chapters 7 and 12]{Davis}. Let $(W,S)$ denote a Coxeter system and let $\mathcal{S}$ denote the set of all \textit{spherical} subsets of $S$, that is subsets $T\subseteq S$ such that $\langle T\rangle=W_T $ is finite. Let $\mathcal{WS}:=\{wW_T\mid w\in W, T\in \mathcal{S} \}$. This is a partially ordered set with respect to $``\subseteq"$. Thus it can be realized geometrically (see \cite[Appendix A.2]{Davis}). We denote this by $\Sigma(W,S):=|\mathcal{WS}|$ and call it `the' \textit{Davis complex}. The Coxeter group $W$ acts by left multiplication properly discontinuously and cocompactly via isometries on the Davis complex $\Sigma(W,S)$. The fundamental domain for this action is $|\mathcal{S}|$ and furthermore $\Sigma(W,S)$ is simply connected.

\begin{example}
	Consider the dihedral group $D_3=\langle s,t| s^2,t^2, (st)^3\rangle$. The spherical subsets of $S=\{s,t\}$ are $\emptyset,\{s\}, \{t\}, S$. Thus our fundamental domain looks like this:
	\begin{center}
		\begin{tikzpicture}
			\coordinate[label=below: {$W_S$}] (A) at (0,0);
			\coordinate[label=right: {$W_{\{s\}}$}] (B) at (3,0);
			\coordinate[label=above: {$W_{\{t\}}$}] (C) at (2,2);
				\coordinate[label=right: {$W_\emptyset$}] (D) at (2.5, 1);

			\fill (A) circle (2pt);
			\fill (B) circle (2pt);
			\fill (C) circle (2pt);
			\fill (D) circle (2pt);
			
			\draw (A) -- (B);
			\draw (A) -- (C);
			\draw (C) --(D) ;
			\draw (D) -- (B);
			\draw (A) -- (D);
		\end{tikzpicture}
	\end{center}
Now considering all the cosets we obtain the following picture of the full Davis-complex:
\begin{center}
	\begin{tikzpicture}[scale=0.8]
				\coordinate[label=below: {$W_S$}] (A) at (0,0);
		\coordinate[label=right: {$W_{\{s\}}$}] (B) at (3,0);
		\coordinate[label=above: {$W_{\{t\}}$}] (C) at (1.5,2);
		\coordinate[label=right: {$W_\emptyset$}] (D) at (2.25, 1);
		\coordinate[label=above: {$tW_\emptyset$}] (E) at (0,2);
		\coordinate[label=above: {$tW_{\{s\}}$}] (F) at (-1.5,2);
		\coordinate[label=left: {$tsW_{\emptyset}$}] (G) at (-2.25,1);
		\coordinate[label=left: {$tsW_{\{t\}}$}] (H) at (-3,0);
		\coordinate[label=left: {$stsW_{\emptyset}$}] (I) at (-2.25,-1);
		\coordinate[label=below: {$st W_{\{s\}}$}] (J) at (-1.5,-2);
		\coordinate[label=below: {$stW_\emptyset$}] (K) at (0,-2);
		\coordinate[label=below: {$sW_{\{t\}}$}] (L) at (1.5,-2);
		\coordinate[label=right: {$sW_\emptyset$}] (M) at (2.25,-1);
		
		\fill (A) circle (2pt);
		\fill (B) circle (2pt);
		\fill (C) circle (2pt);
		\fill (D) circle (2pt);
		\fill (E) circle (2pt);
		\fill (F) circle (2pt);
		\fill (G) circle (2pt);
		\fill (H) circle (2pt);
		\fill (I) circle (2pt);
		\fill (J) circle (2pt);
		\fill (K) circle (2pt);
		\fill (L) circle (2pt);
		\fill (M) circle (2pt);
		
		\draw (A) -- (B);
		\draw (A) -- (C);
		\draw (C) --(D) ;
		\draw (D) -- (B);
		\draw (A) -- (D);
		\draw (A) -- (E);
		\draw (A) -- (F);
		\draw (A) -- (G);
		\draw (A) -- (H);
		\draw (A) -- (I);
		\draw (A) -- (J);
		\draw (A) -- (K);
		\draw (A) -- (L);
		\draw (A) -- (M);
		
		\draw (C) -- (E);
		\draw (E) -- (F);
		\draw (F) -- (G);
		\draw (G) -- (H);
		\draw (H) -- (I);
		\draw (I) -- (J);
		\draw (J) -- (K);
		\draw (K) -- (L);
		\draw (L) -- (M);
		\draw (M) -- (B);
	\end{tikzpicture}
\end{center}
\end{example}

This simplicial structure is great for some applications, however for the question about the \CAT\ property, it is not the best, especially since there are many ways to geometrically realize a simplicial complex yielding different metrics. We will thus define a new metric, the so called \textit{Moussong-metric} on $\Sigma(W,S)$ using the combinatorics of the Coxeter group. Note that since $|\mathcal{S}|$ is a fundamental domain for the natural action of $W$ on $\Sigma(W,S)$, it suffices to define a new metric on this fundamental domain. Therefore we can assume without loss of generality that $(W,S)$ is finite for now. We define a convex polyhedron in $\mathbb{R}^{|S|}$. Since $(W,S)$ is finite, the cosine matrix $A=(a_{ij})_{i,j\in\{1,...,n\}}$ is positive definite and therefore defines a scalar product on $\mathbb{R}^{|S|}$ and thus a metric. Since $A$ is positive definite, we can find vectors $u_1,...,u_n$ such that $\langle e_i,u_j\rangle =\begin{cases}
    1 \quad \text{ if } i=j,\\
    0 \quad \text{ else.}
\end{cases}$ for all $i,j\in \{1,...,n\}$. For $T\subseteq S$ let $C_T$ denote the cone spanned by the vectors $\{u_i| i\in T\}$.  Let $p$ denote the unique point in the interior of $C_S$ at distance $1$ from all cones $C_{S-\{s\}}$, $s\in S$ with respect to $A$. We can orthogonally project $p$ to each of these cones using the scalar product induced by $A$. Let $q_T$ denote the image of the orthogonal projection of $p$ onto $C_T$. Note that $q_S=p$ and $q_\emptyset=o$ where $o$ is the origin. We set $C={\rm conv}(\{q_T\mid T\subseteq S\})$ as the convex hull of all the points $q_T$. The convex cone $C$ equipped with the metric induced by the positive definite bilinear form $\langle.,.\rangle_A$ defines the \textit{Moussong-metric} on $|\mathcal{S}|$.


\begin{example}
	We once again consider the dihedral group $D_3$. Its cosine matrix is given by $A=\begin{pmatrix}
		1 & -0.5\\
		-0.5 & 1
	\end{pmatrix}$. To construct the cone $C_S$ we first need the vectors $u_t$ and $u_s$. An explicit calculation shows $u_s=(\frac{2}{3}, \frac{4}{3})$ and $u_t=(\frac{4}{3},\frac{2}{3})$. This then gives us the following picture.
\begin{center}
	\begin{tikzpicture}[scale=0.75]
		\coordinate (A) at (0,0);
		\coordinate (B) at (6,3);
		\coordinate (C) at (3,6);
		\coordinate (D) at (0,0);
		\coordinate[label=right: {$q_S$}] (E) at (2, 2);
		\coordinate[label=below: {$q_s$}] (F) at (2,1);
		\coordinate[label=left: {$q_t$}] (G) at (1,2);
		\coordinate[label=below: {$C_s$}] (H) at (4,2);
		\coordinate[label=above: {$C_t$}] (I) at (1.8,4);
		\coordinate[label= above: {$C_S$}] (K) at (4,4);
		
		\fill (A) circle (2pt);
		\fill (D) circle (2pt);
		\fill (E) circle (2pt);
		\fill (F) circle (2pt);
		\fill (G) circle (2pt);
		
		\draw (A) -- (B);
		\draw (A) -- (C);
		
		\draw (E) -- (G);
		\draw (E) -- (F);
	\end{tikzpicture}
\end{center}
\end{example}

From now on we will consider $\Sigma(W,S)$ with this metric. Note that essentially the same construction can be made in hyperbolic space rather than Euclidean space. There is also a different way of constructing the same metric, using so called Coxeter polytopes, see \cite[Chapter 7]{Davis}.

\begin{proposition}\label{Proposition}(\cite[Chapters 12, 13]{Moussong})
	Let $(W,S)$ denote a Coxeter system and $\Sigma(W,S)$ denote the Davis complex with the Moussong metric. Then
	\begin{enumerate}
		\item $\Sigma(W,S)$ is simply connected.
		\item $W$ acts on $\Sigma(W,S)$ cocompactly and properly discontinuously via isometries.
		\item The link of a simplex corresponding to a subset $I\subseteq S$ is isometric to $lk(B_I,N(A))$.
	\end{enumerate}
\end{proposition}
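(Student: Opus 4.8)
The plan is to assemble this proposition from the standard development of the Davis complex and the Moussong metric rather than to reprove it from scratch. Parts (1) and (2) are essentially combinatorial facts about the chamber decomposition $\Sigma=\bigcup_{w\in W}wK$, where $K=|\mathcal S|$ is the fundamental chamber, together with soft properties of piecewise-Euclidean complexes; part (3) is the single local computation that genuinely uses the cosine matrix and Lemma \ref{1.3}.

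First I would prove (1). Since $\Sigma$ is covered by the chambers $wK$, $w\in W$, glued along mirrors, and the adjacency graph of the chambers is the Cayley graph of $(W,S)$ (which is connected), $\Sigma$ is connected. To see $\pi_1(\Sigma)=1$, I would homotope a based loop into the $1$-skeleton, arrange that it passes between successive adjacent chambers, and read off the resulting gallery as a word in $S$; since $W$ acts simply transitively on chambers, this word represents the identity of $W$ and hence lies in the normal closure of the relators $(s_is_j)^{m_{ij}}$. It then suffices to fill each relator loop, and this is possible because $\{s_i,s_j\}$ is spherical, so $\Sigma$ contains the finite piece modelled on the dihedral group $W_{\{s_i,s_j\}}$, a $2m_{ij}$-gon whose boundary is exactly the relator loop. (Alternatively one may simply quote \cite[Ch.~8]{Davis} or \cite[Ch.~12]{Moussong}.)

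Next, (2). The group $W$ acts on $\mathcal{WS}$ by $w\cdot vW_T=wvW_T$, an order-preserving bijection, hence acts simplicially on $\Sigma=|\mathcal{WS}|$. This action preserves the Moussong metric: that metric is defined on $K$ and transported to every chamber $wK$ by declaring the tautological map $K\to wK$ to be an isometry, and the gluing of two chambers sharing a mirror $M_s$ is by the identity on $M_s\subseteq K$, so no incompatibility arises. The quotient $\Sigma/W=|\mathcal S|$ is a finite complex, hence compact, so the action is cocompact; the stabiliser of a simplex is a conjugate of some $W_T$ with $T$ spherical, hence finite, so the action is properly discontinuous. Finally, $\Sigma$ is a connected $M_1$ complex built from finitely many isometry types of cells, so by Bridson's theorem \cite[Ch.~I.7]{BH} it is a complete geodesic space, which makes all of the above meaningful.

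Finally, (3), which I expect to be the heart of the matter. Let $A$ be the cosine matrix of $(W,S)$ and $I\subseteq S$ spherical (otherwise there is no such simplex). The link of the type-$I$ cell of $\Sigma$ is built by gluing its links in the finitely many chambers that contain it, and I would reduce the identification with $lk(B_I,N(A))$ to two ingredients. The base case $I=\emptyset$ asserts that the link of the base vertex $W_\emptyset$ of $\Sigma$ is isometric to the spherical nerve $N(A)=lk(B_\emptyset,N(A))$; this is essentially the definition of the Moussong metric, which is calibrated precisely so that the link of this vertex in the fundamental chamber is the spherically metrized nerve. For general $I$, choose $i\in I$ and use that links in a polyhedral complex compose: the link of the type-$I$ cell is the link of a type-$\{i\}$ cell inside the link of the type-$(I\setminus\{i\})$ cell. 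By induction on $|I|$ the latter is isometric to $lk(B_{I\setminus\{i\}},N(A))$, which by Lemma \ref{1.3} is $N(lk(I\setminus\{i\},A))$; applying Lemma \ref{1.3} once more to the matrix $lk(I\setminus\{i\},A)$ identifies the link of the distinguished vertex in that nerve as $N(lk(\{i\},lk(I\setminus\{i\},A)))=N(lk(I,A))\cong lk(B_I,N(A))$, using the iteration clause of Lemma \ref{1.3}. The main obstacle is entirely in the base case: one must check not merely the combinatorial identification but that the piecewise-spherical metrics coincide, which is exactly the local computation performed in \cite[Ch.~12--13]{Moussong}; granting that, the inductive step is formal (the only care being to track that the distinguished type-$\{i\}$ direction on the two sides of the identification agree).
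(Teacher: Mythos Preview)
The paper does not give a proof of this proposition at all; it is stated with the citation \cite[Chapters 12, 13]{Moussong} and used purely as background input for the later arguments. So there is no ``paper's own proof'' to compare against.

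Your outline is a faithful sketch of the standard development one finds in Moussong's thesis and in \cite{Davis}: (i) via the chamber decomposition and filling of relator loops by the dihedral $2m_{ij}$-gons, (ii) via the simplicial action with fundamental domain $|\mathcal S|$ and finite spherical stabilisers, and (iii) via an induction on $|I|$ using the iteration clause of Lemma~\ref{1.3}. You correctly identify that the only genuine work sits in the base case $I=\emptyset$ of (iii), namely the verification that the Moussong metric is calibrated so that the link of the cone vertex in the fundamental chamber is \emph{isometric} (not just combinatorially isomorphic) to $N(A)$; once that is granted the inductive step is formal. Since you defer that computation to \cite{Moussong} exactly as the paper does, your proposal is in agreement with the paper's treatment.
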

This sums up everything we need to know about the Davis complex. In the next section we will discuss how this information is useful and how the original argument worked.

\section{The gap in the original Argument}
Let us first recall three very important theorems that are at the heart of the proof that $\Sigma(W,S)$ is indeed a \CAT\ space. It is helpful to understand the \CAT\ case first, since the Gromov-hyperbolic case is very similar.

In general, it is very hard to check if a geodesic metric space satisfies the \CAT\ condition, since once needs to check all geodesic triangles for the condition. Luckily for complete, simply connected spaces it suffices to check the condition locally. More precisely the so called ``Cartan-Hadamard-Theorem'' says the following.
\begin{theorem}(\cite[Thm II.4.1]{BH})
	A simply connected complete metric space $X$ is \CAT\ if and only if it is locally \CAT.
\end{theorem}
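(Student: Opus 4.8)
The plan is to prove the two implications separately. The forward implication (\CAT\ $\Rightarrow$ locally \CAT) is immediate, since every metric ball in a \CAT\ space is convex and hence is itself a \CAT\ space, while the local condition only speaks about arbitrarily small balls. So assume from now on that $X$ is complete, simply connected and locally \CAT, i.e.\ every point has a \CAT\ neighbourhood. A first observation is that this hypothesis already forces $X$ to be a length space in which every sufficiently small ball is convex and uniquely geodesic; in particular local geodesics exist and are locally unique, and along a local geodesic $\gamma$ the function $t\mapsto d(p,\gamma(t))^2$ is locally strongly convex (the differential form of the \CAT\ inequality, $\tfrac{d^2}{dt^2}d(p,\gamma(t))^2\geq 2|\dot\gamma|^2$ in the barrier sense). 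The two things left to upgrade from local to global are then: (a)~that $X$ is globally uniquely geodesic, with geodesics depending continuously on their endpoints, and (b)~that the strong convexity above holds along \emph{all} geodesics of $X$, since together with (a) this is well known to be equivalent to the global \CAT\ inequality.

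For (a), existence of a geodesic between given points $p,q$: pick any path from $p$ to $q$ (possible since a simply connected space is path-connected), subdivide it so finely that consecutive pieces lie in \CAT\ balls, and replace each piece by the geodesic with the same endpoints. This yields a broken geodesic homotopic rel endpoints to the original path and no longer than it; iterating, and using completeness together with the local convexity estimates (which make the successive broken geodesics a uniformly Cauchy family), produces a local geodesic $\gamma$ from $p$ to $q$ whose length is the infimum of lengths in its homotopy class. For uniqueness and minimality one proves a ``no small bigon'' lemma: if $\gamma_0,\gamma_1$ are local geodesics with the same endpoints joined by a homotopy rel endpoints with small enough tracks, then $t\mapsto d(\gamma_0(t),\gamma_1(t))$ is locally convex and vanishes at both ends, hence is identically zero, so $\gamma_0=\gamma_1$. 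Since $X$ is simply connected any two paths from $p$ to $q$ are homotopic rel endpoints, and subdividing such a homotopy into small sub-homotopies lets one apply the no-bigon lemma step by step; this shows that all local geodesics from $p$ to $q$ coincide, so $\gamma$ is \emph{the} geodesic from $p$ to $q$ and is minimizing. The same kind of argument gives continuous dependence on endpoints.

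For (b) it suffices to verify the \CAT\ inequality for an arbitrary geodesic triangle $\Delta(A,B,C)$, and in fact only in the form $d(A,x)\leq d(\widetilde A,\widetilde x)$ for $x$ on the opposite side $[B,C]$, where tildes denote the Euclidean comparison triangle. Subdivide $[B,C]$ into short segments, join $A$ to each subdivision point by the (now unique) geodesic, and iterate the subdivision two-dimensionally until $\Delta(A,B,C)$ is cut into geodesic triangles each contained in a single \CAT\ ball; in each of these the \CAT\ inequality holds by hypothesis. One then reassembles: gluing the comparison triangles of two adjacent sub-triangles along their common side produces a planar quadrilateral, and Alexandrov's lemma (see \cite{BH}) controls how distances change when this quadrilateral is straightened, which lets one prove by induction over the subdivision that the comparison inequality holds for every union of the sub-triangles, ending with $\Delta(A,B,C)$ itself. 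Equivalently, one shows that $t\mapsto d(A,c(t))^2$ is $2\,\ell([B,C])^2$-convex (with $c$ the affinely parametrized side $[B,C]$), a local condition that again follows from the subdivision once the angle bookkeeping is carried out. Combining (a) and (b), $X$ is \CAT.

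The main obstacle is the local-to-global step (b). For the one-variable convexity used in (a) localization is painless, but a long thin geodesic triangle simply cannot be placed inside a single \CAT\ ball, so the local \CAT\ inequality cannot be applied to it directly; one is forced to subdivide the entire triangle and patch the comparison estimates for the pieces, and that patching is exactly where Alexandrov's lemma and a somewhat delicate bookkeeping of Alexandrov angles come in. A secondary point requiring care is the convergence of the straightening process in (a), where completeness of $X$ is genuinely used, and the reading of ``locally \CAT'' as ``every point has a \CAT\ neighbourhood'', which already makes $X$ a length space so that the small balls behave as the argument needs.
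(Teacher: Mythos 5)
Your outline is correct and is essentially the standard argument behind the cited result: the paper itself gives no proof (it simply quotes \cite[Thm II.4.1]{BH}), and the proof in \cite{BH} proceeds exactly as you describe, first upgrading local geodesics to unique globally minimizing geodesics depending continuously on endpoints (using completeness, convexity of the distance between nearby local geodesics, and simple connectivity), and then passing from the local to the global \CAT\ inequality by subdividing triangles and patching comparison estimates via Alexandrov's lemma. So your proposal matches the intended (cited) proof, with only routine details of the curve-straightening convergence and the patchwork bookkeeping left to fill in.
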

However, even checking the condition locally at every point is a daunting task. Thus the case of polyhedral complexes has been studied and a simpler condition has been found when these spaces are \CAT.
\begin{theorem}(\cite[Thm II.5.4]{BH})
	Given $\kappa\leq 0$, an $M_\kappa$ polyhedral complex with finitely many shapes of cells is locally CAT$(\kappa)$ if and only if the link of every vertex is CAT$(1)$.
\end{theorem}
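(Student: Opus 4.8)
The plan is to pass to a local model. Since $\mathrm{Shapes}(K)$ is finite, $K$ is a complete geodesic space (\cite[Ch.~I.7]{BH}), so ``locally CAT$(\kappa)$'' is meaningful and may be tested on small metric balls around points. The first and main step is to establish that $K$ is \emph{locally conical}: for every $x\in K$ there is $\varepsilon>0$ such that the open ball $B(x,\varepsilon)\subseteq K$ is isometric to the open $\varepsilon$-ball about the apex $\hat 0$ of the $\kappa$-cone $C_\kappa\big(\mathrm{Lk}(x,K)\big)$. One proves this cell by cell: in an $M_\kappa$-simplex a sufficiently small ball about a point is isometric to a cone neighbourhood on the link of that point in the simplex, the cone structures coming from the various cells through $x$ agree along common faces, and finiteness of $\mathrm{Shapes}(K)$ makes the radius uniformly positive. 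I expect this compatibility-of-cone-structures bookkeeping, together with the (standard but not entirely free) identification of the combinatorial link $\mathrm{Lk}(x,K)$ with the metric space of directions $\Sigma_x K$, to be the principal technical obstacle; everything afterwards is a formal consequence of results in \cite[Ch.~II.3]{BH}.

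Granting the conical model, the second step is to decide when it is CAT$(\kappa)$. For $\kappa\le 0$ one has that $B(\hat 0,\varepsilon)\subseteq C_\kappa(Y)$ is CAT$(\kappa)$ if and only if $Y$ is CAT$(1)$: the ``if'' direction is Berestovskii's theorem that $C_\kappa(Y)$ is CAT$(\kappa)$ whenever $Y$ is CAT$(1)$ (\cite[Ch.~II.3]{BH}), combined with the fact that for $\kappa\le 0$ small metric balls in a CAT$(\kappa)$ space are convex and hence themselves CAT$(\kappa)$; the ``only if'' direction follows because the space of directions at a point of a CAT$(\kappa)$ space is CAT$(1)$ (\cite[Ch.~II.3]{BH}) and the space of directions at the apex of $C_\kappa(Y)$ is canonically $Y$. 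Feeding this into the first step yields the intermediate statement: $K$ is locally CAT$(\kappa)$ if and only if $\mathrm{Lk}(x,K)$ is CAT$(1)$ for \emph{every} point $x\in K$.

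The third step is to reduce ``every point'' to ``every vertex''. If $x$ lies in the relative interior of a cell $C$ with $\dim C=k$, then $\mathrm{Lk}(x,K)$ is the spherical join $\mathbb{S}^{k-1}\ast\mathrm{Lk}(C,K)$, and since $\mathbb{S}^{k-1}$ is CAT$(1)$, the spherical join lemma (\cite[Ch.~I.5 and Ch.~II.3]{BH}) shows $\mathrm{Lk}(x,K)$ is CAT$(1)$ precisely when $\mathrm{Lk}(C,K)$ is. So the condition becomes: $\mathrm{Lk}(C,K)$ is CAT$(1)$ for every cell $C$. If $C$ has a vertex $v$, then transitivity of links (the geometric analogue of the identity $lk(I,A)=lk(\{i\},lk(I\setminus\{i\},A))$ of Lemma \ref{1.3}) gives $\mathrm{Lk}(C,K)\cong\mathrm{Lk}\big(\mathrm{Lk}(v,C),\,\mathrm{Lk}(v,K)\big)$; and the link of any simplex $\tau$ in a CAT$(1)$ $M_1$-complex $L$ is again CAT$(1)$, since it is a spherical-join factor of the space of directions at an interior point of $\tau$ in $L$, and spaces of directions in CAT$(1)$ spaces are CAT$(1)$. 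Hence if all vertex links of $K$ are CAT$(1)$ then so are all cell links, so $K$ is locally CAT$(\kappa)$; conversely, if $K$ is locally CAT$(\kappa)$ then in particular $\Sigma_v K=\mathrm{Lk}(v,K)$ is CAT$(1)$ for every vertex $v$. It is worth noting that this last step never invokes the theorem recursively at parameter $1>0$: the curvature input needed for the links is always the general principle that a space of directions is CAT$(1)$, which is exactly why, in the range $\kappa\le 0$, no auxiliary diameter hypothesis on the cells of $K$ is required.
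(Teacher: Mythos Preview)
The paper does not prove this theorem; it is quoted as \cite[Thm~II.5.4]{BH} and used as a black box in the overview of Moussong's strategy. There is therefore no ``paper's own proof'' to compare against.

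Your outline is essentially the standard argument one finds in \cite[Ch.~II.5]{BH}: identify a small ball around $x$ with a ball about the apex in the $\kappa$-cone over $\mathrm{Lk}(x,K)$ (using finiteness of shapes to get a uniform radius), invoke Berestovskii's theorem and the CAT$(1)$ property of spaces of directions to translate the local CAT$(\kappa)$ condition on $K$ into the CAT$(1)$ condition on links, and then reduce from arbitrary points to vertices via the spherical-join description of $\mathrm{Lk}(x,K)$ for $x$ in the interior of a cell together with transitivity of links. The argument is correct as sketched; the only place where one should be slightly careful is the ``only if'' direction in Step~2, where you appeal to the fact that the space of directions in a CAT$(\kappa)$ space is CAT$(1)$: this is \cite[Thm~II.3.19]{BH} and gives that the \emph{completion} of $\Sigma_xK$ is CAT$(1)$, but for an $M_\kappa$-complex with finitely many shapes the link is already complete, so no issue arises.
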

For almost negative matrices and their nerve complexes, one only needs to calculate their girth to check if they are CAT$(1)$ by the following result.
\begin{proposition}(\cite[p. 17]{Moussong}/\cite[Thm. II.5.4]{BH})
	 The nerve complex of an almost negative matrix is CAT$(1)$ if and only if its girth and the girth of all links is $\geq 2\pi$.
\end{proposition}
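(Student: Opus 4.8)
The plan is to induct on the dimension of the nerve complex $N(A)$, feeding the description of links furnished by Lemma \ref{1.3} into the local-to-global criterion for piecewise spherical complexes. First I would observe that $N(A)$ is an $M_1$-polyhedral complex with only finitely many isometry types of cells, since it has at most $2^n$ cells $B_I$. By the positively curved analogue of \cite[Thm II.5.4]{BH} — to the effect that an $M_1$-polyhedral complex with finitely many shapes is CAT$(1)$ if and only if the link of every vertex is CAT$(1)$ and the complex contains no isometrically embedded circle of length $<2\pi$ — one sees that $N(A)$ is CAT$(1)$ if and only if (a) the girth of $N(A)$ is $\ge 2\pi$ and (b) the link $lk(B_{\{i\}},N(A))$ of every vertex $v_i$ is CAT$(1)$.

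Next I would invoke Lemma \ref{1.3}: the link $lk(B_{\{i\}},N(A))$ is isometric to $N(lk(\{i\},A))$, which is again the nerve complex of an almost negative matrix and has dimension one less than that of $N(A)$. Hence condition (b) is precisely of the form governed by the induction hypothesis. The base case $\dim N(A)\le 0$ is immediate, since then $N(A)$ is a finite discrete set (possibly empty): it is visibly CAT$(1)$, it contains no circle at all and so has girth $\ge 2\pi$ by convention, and its vertex links are empty, so the claimed equivalence holds trivially.

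For the inductive step I would apply the induction hypothesis to each link matrix $lk(\{i\},A)$, obtaining that $N(lk(\{i\},A))$ is CAT$(1)$ if and only if its girth and the girth of all of its links are $\ge 2\pi$. Iterating the identity $lk(I,A)=lk(\{i\},lk(I\setminus\{i\},A))$ of Lemma \ref{1.3}, together with the standard fact that a link taken inside a link is again the link of the corresponding larger face, shows that the links of $N(lk(\{i\},A))$ are exactly the complexes $N(lk(I,A))$ with $i\in I$ and $|I|\ge 2$. Quantifying condition (b) over all vertices $v_i$ therefore amounts to requiring that the girth of $N(lk(I,A))$ be $\ge 2\pi$ for every nonempty $I$ with $A_I$ positive definite; adjoining condition (a), which is the case $I=\emptyset$, yields exactly the statement that the girth of $N(A)$ together with the girth of all of its links is $\ge 2\pi$, completing the induction.

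The step I expect to be the main obstacle is to pin down the local-to-global theorem in exactly the right form in the positively curved case: unlike for $\kappa\le 0$, simple connectivity does not suffice, and the $2\pi$-girth hypothesis is genuinely needed in addition to the link condition. A secondary, more routine point is to verify that the iterated vertex links of $N(A)$ exhaust the family $\{N(lk(I,A))\}_I$, so that "all links" in the statement matches this list precisely, and to fix once and for all the convention that a complex containing no isometrically embedded circle has girth $\ge 2\pi$.
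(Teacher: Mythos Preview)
The paper does not give its own proof of this proposition; it is simply quoted from \cite[p.~17]{Moussong} and \cite[Thm.~II.5.4]{BH} as a known input. Your argument is correct and is precisely the standard derivation behind those citations: invoke the $M_1$-version of the link-condition/local-to-global theorem (an $M_1$-complex with finitely many shapes is CAT$(1)$ iff every vertex link is CAT$(1)$ and there is no closed locally geodesic circle of length $<2\pi$), identify vertex links with $N(lk(\{i\},A))$ via Lemma~\ref{1.3}, and induct on dimension using the iteration $lk(I,A)=lk(\{i\},lk(I\setminus\{i\},A))$ to see that the iterated vertex links exhaust the family $\{N(lk(I,A))\}_I$.

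One purely cosmetic point: the paper defines girth as the infimum of lengths of \emph{closed geodesics} (locally geodesic loops), not of isometrically embedded circles, so you should phrase the $2\pi$-condition that way. The two formulations coincide once the link condition is in place, but matching the paper's definition avoids an extra justification.
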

A big part of \cite{Moussong} is calculating the girth of nerve complexes of almost negative matrices. We will not give the arguments here. However for the \CAT\ case, the main result is the following
\begin{proposition}(\cite[Cor. 10.2]{Moussong})
	Given a Coxeter system $(W,S)$ with cosine matrix $A$, then $g(N(A))\geq 2\pi$ and $g(N(lk(I,A)))\geq 2\pi$ for all $I$ such that $A_I$ is positive definite.
\end{proposition}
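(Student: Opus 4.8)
The plan is to deduce the statement from Moussong's Lemma (\cite[Lemma II.5.21]{BH}), which asserts that a piecewise spherical simplicial complex with only finitely many shapes of cells that is a \emph{metric flag complex} and all of whose edges have length at least $\pi/2$ is CAT$(1)$. Thus it suffices to verify these hypotheses for the nerve complex $N(A)$ of the cosine matrix $A$ of $(W,S)$ and then to translate CAT$(1)$ into the girth inequalities.

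First I would record the edge lengths of $N(A)$. Since $a_{ii}=1$ for a cosine matrix, the vertex associated with $i$ is $v_i=e_i$, and a pair $\{i,j\}$ spans an edge precisely when the $2\times 2$ principal submatrix $A_{\{i,j\}}$ (with diagonal entries $1$ and off-diagonal entry $-\cos(\pi/m_{ij})$) is positive definite, i.e.\ when $m_{ij}<\infty$. For such a pair, $\cos d(v_i,v_j)=\langle v_i,v_j\rangle_A=-\cos(\pi/m_{ij})$, so $d(v_i,v_j)=\pi-\pi/m_{ij}\in[\pi/2,\pi)$ because $m_{ij}\geq 2$. Hence every edge of $N(A)$ has length at least $\pi/2$, and since $|S|=n$ is finite there are only finitely many shapes of cells.

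Next I would check that $N(A)$ is a metric flag complex. Let $I\subseteq\{1,\dots,n\}$ consist of vertices pairwise joined by edges, and suppose there exists a spherical simplex whose edge lengths are the numbers $\ell_{ij}=\pi-\pi/m_{ij}$, $i,j\in I$. The Gram matrix of the vertices of such a simplex has diagonal entries $1$ and off-diagonal entries $\cos\ell_{ij}=-\cos(\pi/m_{ij})=a_{ij}$, so it equals $A_I$; and the existence of such a simplex (whose vertices are linearly independent, hence lie in an open hemisphere) is equivalent to $A_I$ being positive definite. By the definition of $N(A)$ this is exactly the condition that $I$ spans a simplex, so the metric flag condition holds. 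Note that $N(A)$ is not a flag complex in the combinatorial sense once $(W,S)$ contains an affine parabolic subsystem of rank $\geq 3$ (such a subset is pairwise edged but not spherical); this causes no problem, since for such an $I$ the matrix $A_I$ is only positive semidefinite, so no spherical simplex realizes those edge lengths, in accordance with the metric flag condition.

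By Moussong's Lemma, $N(A)$ is therefore CAT$(1)$. By the criterion recalled above (\cite[p. 17]{Moussong}), this is equivalent to the assertion that the girth of $N(A)$ and the girths of all its iterated links are at least $2\pi$; and by Lemma~\ref{1.3} those links are precisely the nerves $N(lk(I,A))$ for subsets $I$ with $A_I$ positive definite, which is exactly the claim of the Proposition. I expect the only genuinely delicate point to be the metric flag verification — specifically, pinning down the identification of the edge-length Gram matrix with $A_I$, and noting that Moussong's Lemma requires only the \emph{metric} flag property, not combinatorial flagness, which really does fail for $N(A)$ in the presence of affine parabolic subgroups of rank at least $3$. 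Everything else is bookkeeping. One could also avoid Moussong's Lemma and bound the girth directly, by straightening a hypothetical short closed geodesic of $N(A)$ cell by cell using that all edges have length at least $\pi/2$, but this would essentially re-prove Moussong's Lemma.
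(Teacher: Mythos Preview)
Your argument is correct. Note, however, that the paper does not give its own proof of this proposition; it simply cites \cite[Cor.~10.2]{Moussong} and states explicitly that ``we will not give the arguments here.'' In Moussong's thesis the result is obtained by a direct computation of the girth of $N(A)$ (the chain of lemmas in his Chapter~9 leading to Prop.~10.1 and Cor.~10.2); this computation is precisely the content that was later repackaged as ``Moussong's Lemma'' in \cite[Lemma~II.5.21]{BH}. Your route inverts the presentation: you invoke Moussong's Lemma in its BH form to get CAT$(1)$, and then read the girth inequalities back off from CAT$(1)$ via the link criterion \cite[p.~17]{Moussong}/\cite[Thm.~II.5.4]{BH} together with Lemma~\ref{1.3}. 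So the two arguments are the same in substance, differing only in which direction of the equivalence ``CAT$(1)$ $\Leftrightarrow$ all girths $\geq 2\pi$'' is used as input and which as output. Your metric-flag verification (identifying the Gram matrix of a putative spherical simplex with $A_I$) is exactly the right point to check, and your remark that combinatorial flagness genuinely fails in the presence of affine subsystems is accurate and worth keeping.
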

Since every link of every cell in $\Sigma(W,S)$ is isometric to a link of a cell in the fundamental domain joined with a sphere \cite[p. 41]{Moussong}, the above theorems together with Proposition 2.2 show that indeed, for every Coxeter system $(W,S)$, the complex $\Sigma (W,S)$ is a \CAT\ space and thus every Coxeter group is a \CAT\ group. This part of the argument is problem free (see \cite[Thm. 14.1]{Moussong}).

For the Gromov-hyperbolic case one realizes the following: The hyperbolic space $\mathbb{H}^n$ is near-isometric to the Euclidean space $\mathbb{E}^n$ close to the origin, in fact, the map gets arbitrarily close to an isometry, if we get close enough to the origin. Now suppose we are given a Coxeter system $(W,S)$, such that in Proposition 3.4, we have a strict inequality for the girths of the nerve and all its links. Then we can construct the metric on the Davis complex choosing a point $p$ at distance $\varepsilon$ from all the cones instead of at distance $1$ and use hyperbolic space instead of Euclidean. Since the hyperbolic space is near-isometric to Euclidean space, the links of all cells will also be near-isometric to the Euclidean links. Since we have the strict inequality, and the number of spherical subsets of $S$ is finite, we can choose an $\varepsilon$ so small that the girth of all the links is still $\geq 2\pi$ (because of the strict inequality in Proposition 3.4). Therefore, if we can find conditions on $(W,S)$, such that the strict inequality holds, we have proven that those Coxeter groups are CAT$(-1)$ groups, and thus Gromov-hyperbolic \cite[p. 50]{Moussong}.

A big part of the calculations of the girth of nerve complexes in \cite{Moussong} deals exactly with this strict inequality. Many lemmas and propositions have an additional sentence of the form ``if additionally $\ast $ holds, then the strict inequality holds''. The conditions in $\ast$ vary, however they all aim at making sure that the Coxeter system $(W,S)$, from which the almost negative matrix $A$ is obtained via the cosine matrix construction, does not contain a subgroup isomorphic to $\mathbb{Z}^2$. The main results are \cite[Lemma 10.3 and Cor. 10.4]{Moussong} for which we don't have a counterexample. However their proof relies crucially on \cite[Lemma 9.11]{Moussong}, for which we give a counterexample. In the final section we fix the proof of \cite[Lemma 10.3]{Moussong}, which implies \cite[Cor. 10.4]{Moussong}. In this section we discuss the problems with the proof of \cite[Lemma 9.11]{Moussong}, which come from two previous lemmas. We will always mention the statement of the original lemma for better readability as well as state in which cases it does not hold.

Before we can discuss the lemmas we need a few very important definitions.
\begin{definition}
	Let $A=(a_{ij})_{i,j\in I}$ denote a symmetric matrix. A \textit{principal submatrix} of $A$ is a matrix of the form $B= (a_{ij})_{i,j\in J}$ for a subset $J\subsetneq I$. A matrix $A$ is called \textit{parabolic} if det$(A)=0$, $|I|\geq 2$ and all principal submatrices are positive definite.
\end{definition}
\begin{remark}\label{remark}
    We require $|I|\geq 2$ for the parabolicity for the following reasons:
    \begin{enumerate}
        \item With this definition, in the case of Coxeter groups, the cosine matrix is parabolic if and only if the Coxeter group is affine. 
        \item Due to the problems occuring in \cite[Lemma 9.7]{Moussong}, we need this definition, as otherwise dealing with the case that a link matrix has a $0$ on the diagonal is very tricky.
        \item It illustrates the process how the counterexamples were found and why we are working ``from the bottom up'', that is starting with small link matrices and working our way up.
    \end{enumerate}
\end{remark}
Now we can state the faulty Lemmas and prove in which cases they still hold true.
\vspace{0.4cm}
\begin{ftheo} Let $A$ denote an almost negative $n\times n$ matrix and $u\in\mathbb{R}^n$ a vector with nonnegative coordinates and with $\langle u,u\rangle =1$ (with respect to the bilinear form given by $A$). Then there exists a $z\in N(A)$ such that $\langle u,z\rangle \geq 1$. 
Suppose $A$ has no principal parabolic submatrices, then there exists a vector $z \in N(A)$ such that $\langle u,z\rangle >1$.
\end{ftheo}
\vspace{0.2cm}
\begin{lemma}
	Let $A$ denote an almost negative matrix. Then the strict inequality \cite[Lemma 9.5]{Moussong} holds, if $A$ does not have a principal submatrix of order $\geq 2$ which contains a row or column consisting entirely of zeroes and there exists a counterexample to the general statement.
\end{lemma}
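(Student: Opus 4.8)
\emph{Approach.} The plan is to keep Moussong's proof of Lemma 9.5 and pin down the one point at which it breaks, then verify that the stated hypothesis makes that point vacuous; the counterexample is handled separately. Recall how the strict inequality is obtained. Given $u\ge 0$ with $\langle u,u\rangle=1$, let $z_0\in N(A)$ maximise $z\mapsto\langle u,z\rangle$ — it exists since $N(A)$ is compact and $\langle u,z_0\rangle\ge 1$ by the weak inequality — and let $I_0$ be the carrier of $z_0$, so $A_{I_0}$ is positive definite and $z_0$ lies in the relative interior of $B_{I_0}$. First order maximality forces $z_0$ to be the normalisation of the $A$-orthogonal projection $p$ of $u$ onto $\mathrm{span}\{e_i:i\in I_0\}$, whence $\langle u,z_0\rangle=\langle p,p\rangle^{1/2}$; since $p$ and $w:=u-p$ are $A$-orthogonal, $1=\langle p,p\rangle+\langle w,w\rangle$ and so $\langle u,z_0\rangle>1$ \emph{unless} $w$ is a nonzero isotropic vector, which happens precisely when $u\notin N(A)$, i.e.\ when $A_{\mathrm{supp}(u)}$ is not positive definite — the only case in which a strict inequality can hold at all, and the only one that occurs in the applications.

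\emph{The gap and the counterexample.} Failure of the strict inequality therefore means $w=u-p$ is a nonzero isotropic vector, $A$-orthogonal to $e_i$ for every $i\in I_0$. Moussong disposes of this configuration by descending, via the link-matrix construction, to a matrix of smaller size, and the descent tacitly assumes that the relevant principal submatrix of order $\ge 2$ is not parabolic — his standing hypothesis. The oversight is that such an isotropic $w$ can equally be produced by a principal submatrix of order $\ge 2$ with a row (hence column) of zeros: that submatrix splits off an isolated block, the corresponding nerve vertex is $A$-null and at distance $\pi/2$ from everything, and the improvement step stalls. The minimal witness is $A=\begin{pmatrix}0&0\\0&1\end{pmatrix}$ with $u=(1,1)^{T}$: here $\langle u,u\rangle=1$, $N(A)=\{e_2\}$, $\langle u,e_2\rangle=1$, and the only principal submatrix of order $\ge 2$ is $A$ itself, which is not parabolic because its $1\times 1$ submatrix $(0)$ is not positive definite. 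This is the counterexample to the general statement, and $A$ is visibly ruled out by the hypothesis ``no principal submatrix of order $\ge 2$ with a zero row or column''.

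\emph{The positive direction.} Assuming $A$ has no principal submatrix of order $\ge 2$ that is parabolic or has a zero row or column, I would argue as follows. Reduce first to $\mathrm{supp}(u)=\{1,\dots,n\}$ by restricting $A$ to $\mathrm{supp}(u)$ — both forbidden-submatrix conditions pass to principal submatrices, $N(A_{\mathrm{supp}(u)})\subseteq N(A)$, and the two matrices realise the same values of $\langle u,\cdot\rangle$ — and then to $A$ not positive definite. It remains to exclude the isotropic configuration. Here $w$ carries extra structure: $Aw$ vanishes on $I_0$, one has $w_j=u_j>0$ for $j\notin I_0$ (because $u$ is strictly positive and $p$ is supported on $I_0$), and first order maximality of $z_0$ in $N(A)$ imposes $(Aw)_j\le 0$ for those $j\notin I_0$ with $I_0\cup\{j\}$ still carrying a face of $N(A)$; combined with $\sum_{j\notin I_0}w_j(Aw)_j=w^{T}Aw=0$ this is meant to force $Aw=0$. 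Once $Aw=0$, the principal submatrix $A_{\mathrm{supp}(w)}$ is singular with a strictly positive vector in its kernel; an indecomposable almost negative matrix with this property is parabolic of order $\ge 2$ unless it is the $1\times 1$ zero block, so $A_{\mathrm{supp}(w)}$ splits as an orthogonal direct sum of such pieces. Either a parabolic principal submatrix of order $\ge 2$ appears, or $A_{\mathrm{supp}(w)}$ is a zero matrix, in which case $|\mathrm{supp}(w)|\ge 2$ already gives a zero-row principal submatrix of order $\ge 2$ while $|\mathrm{supp}(w)|=1$ forces a zero row of $A$ (and $n\ge 2$, since $n=1$ is incompatible with $\langle u,u\rangle=1$). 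Every case contradicts the hypothesis, so $\langle u,z_0\rangle>1$.

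\emph{Main obstacle.} The delicate step is exactly the passage from the isotropic configuration to $Aw=0$ (equivalently, directly to a forbidden submatrix): it must combine the sign information coming from first order maximality, the strict positivity of $w$ off $I_0$, and a careful treatment of the coordinates $j$ for which $I_0\cup\{j\}$ fails to carry a face — precisely the places where Moussong's original descent loses control in the zero-row case. Conceptually this is a reorganisation of Moussong's inductive bookkeeping rather than a new idea, and the subsequent Lemma~B — that a link matrix with a zero row or column forces $A=B\oplus C$ — is what will allow the later sections to route around exactly the cases this lemma has to exclude.
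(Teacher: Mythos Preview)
Your counterexample is the same as the paper's (the paper uses $A=\begin{pmatrix}1&0\\0&0\end{pmatrix}$ and $u=(1,1)$, i.e.\ your matrix with the indices swapped), so that half is fine.

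For the positive direction your outline diverges from the paper and the step you yourself flag as the ``main obstacle'' is a genuine gap. You want to pass from ``$w=u-p$ is nonzero and isotropic with $(Aw)_i=0$ for $i\in I_0$'' to ``$Aw=0$'', using first-order maximality to force $(Aw)_j\le 0$ for $j\notin I_0$ and then the identity $\sum_{j\notin I_0}w_j(Aw)_j=0$ with $w_j>0$. But, as you note, maximality only constrains those $j$ with $A_{I_0\cup\{j\}}$ positive definite; for the remaining $j$ there is no sign information on $(Aw)_j$, and nothing in your sketch explains how to control them. Without that, the conclusion $Aw=0$ does not follow, and the subsequent Perron--Frobenius style argument never gets off the ground.

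The paper does not attempt to prove $Aw=0$. It stays inside Moussong's original decomposition $u=\sum_{w\in F'}\alpha_w\,w+\beta z$ with $z\in N(A)$ and the partition of $F'$ into $F_2$ (basis vectors $u_i$) and $F_3$, and localises the failure: the only way Moussong's inequality $\langle w,w\rangle<0$ can break is for a $w\in F'\cap F_2$ with $a_{ii}=0$. From there it is a short explicit case split. If two such $w,w'$ occur, the corresponding $2\times 2$ principal submatrix is identically zero. If exactly one occurs, then $u=\alpha_w w+\beta z$, and expanding $1=\langle u,u\rangle$ together with $\langle w,z\rangle\le 0$ either gives $\beta>1$ (hence $\langle u,z\rangle>1$) or forces $\langle w,z\rangle=0$, which again produces a principal submatrix of order $\ge 2$ with a zero row. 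So the paper extracts the forbidden submatrix directly from two or three inner products, rather than from a global equation $Aw=0$; this is exactly what sidesteps the indices you cannot control.
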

\begin{proof}
We first give a counterexample to \cite[Lemma 9.5]{Moussong}:

Consider the matrix $A=\begin{pmatrix}
	1&0\\
	0&0
\end{pmatrix}$ and $\mathbb{R}^2$ with the basis ${(1,0), (0,1)}$. Then the nerve consists of exactly one point: $N(A)=p$, with $p={(1,0)}$.\\
Consider the point $u=(1,1)\in \mathbb{R}^2$, which has nonnegative coordinates. There is only one possible choice for $z\in N(A)$, namely $z=(1,0)$, so we have $\langle u,z\rangle=\langle (1,1),(1,0)\rangle=1\not{>}1$.

The original proof works in every other case, since it only breaks in the second to last line on page 27 in \cite{Moussong}. It is claimed that $\langle w,w\rangle <0$, but this is false in the above case for $w=(0,1)$.
But it holds true if we do not have a principal submatrix of order $\geq 2$ having a row or column consisting entirely of zeroes. This is true because of the following reason. First of all by filling up the vectors with $0$s it follows that if any principal submatrix provides a counterexample, then the entire matrix provides a counterexample. Hence we assume $A$ does not have a principal submatrix of order $\geq 2$ which has a row or column consisting entirely of zeroes and work in the notation of Moussong. Since $u\notin N(A)$, at least one $\alpha_w$ for $w\in F'\cap (F_2\cup F_3)$ is not equal to $0$. If this is the case for a $w\in F_3$, then $\langle w,w\rangle <0$ since $A$ does not contain principal parabolic submatrices. Hence there is at least one $w\in F'\cap F_2$ such that $\alpha_w\neq 0$. Since $w\in F'\cap F_2$, $w=u_i$ for some $i$ and thus $a_{ii}=0$ if $\langle w,w\rangle=0$. If there are $w\neq w'\in F'\cap F_2$ such that $\alpha_w\neq 0\neq \alpha_{w'}$, then the entry $a_{ij}$ which corresponds to $(w,w')$ is $0$ and thus the principal submatrix $A_{\{i,j\}}$ consists entirely of zeroes. So suppose there is exactly one such $w$ and $\langle w,w\rangle =0$. Now we know that $u=\alpha_w w+\beta z$ for $z\in N(A)$ and $\beta >0$. We have 
$$ 1=\langle u,u\rangle=\langle w,w\rangle +2\langle w,\beta z\rangle +\beta^2\langle z,z\rangle=2\beta \langle w,z\rangle +\beta^2\langle z,z\rangle$$
By definition of $w,z$ we have $\langle w,z\rangle\leq 0$. If we have equality, then the principal submatrix corresponding to the index set of the non-zero summands of $z$ and $w$ gives us a principal submatrix which has a row consisting of zeroes. If $\langle w,z\rangle <0$, then $\beta >1$ and hence $\langle u,z\rangle = \langle w,z\rangle +\beta \langle z,z\rangle =\beta >1$ as desired.
\end{proof}
\begin{remark}
    If one defines a $1\times 1$-matrix with entry $0$ as parabolic, then $A$ cannot contain a principal submatrix of order $\geq 2$ which has a row consisting entirely of zeroes, so the Lemma holds true. However the following two Lemmas also break for this definition, in that case the mistake happens in the proof of Lemma 9.7.
\end{remark}
\begin{ftheo1}
    
Let $A$ denote an almost negative $n\times n$ matrix and $x,y\in N(A)$ such that $\langle x,y\rangle >-1$. If $A$ has no principal parabolic submatrices of order $\geq 3$ and if there is no simplex in $N(A)$ containing both $x$ and $y$, then $d(x,y)<\cos^{-1}\left(\langle x,y\rangle\right)$, where $d$ denotes the intrinsic metric of $N(A)$ as in \cite[Chapter 3]{Moussong} or \cite[Chapter 1.3]{P}.
\end{ftheo1}
\begin{lemma}\label{9.7}
	Let $A$ denote an almost negative matrix. Then the strict inequality in \cite[Lemma 9.7]{Moussong} holds if $A$ does not have a link matrix which has a principal submatrix of order $\geq 2$ with a row or column consisting entirely of zeroes and if no link matrix contains a principal parabolic submatrix of order $2$. Moreover there exists a counterexample to the general statement.
\end{lemma}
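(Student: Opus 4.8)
The plan is in two parts: (1) exhibit an explicit counterexample to the general strict statement of \cite[Lemma 9.7]{Moussong}, and then (2) prove that the strict inequality does hold under the two additional hypotheses (no link matrix of $A$ has a principal submatrix of order $\ge 2$ with a zero row or column, and no link matrix of $A$ contains a principal parabolic submatrix of order $2$).

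For the counterexample, consider
\[
A=\begin{pmatrix}1&-1&0&0\\ -1&1&0&0\\ 0&0&1&-\tfrac12\\ 0&0&-\tfrac12&1\end{pmatrix}.
\]
Here $lk(\{1\},A)$ has the row indexed by $2$ equal to zero, and $lk(\{3\},A)$ contains the principal parabolic submatrix on $\{1,2\}$, so both additional hypotheses fail; on the other hand $A$ has no principal parabolic submatrix of order $\ge 3$. Inspecting which $A_I$ are positive definite shows that $N(A)$ is the union of the spherical triangles $B_{\{1,3,4\}}$ and $B_{\{2,3,4\}}$ glued along the edge $B_{\{3,4\}}$, with no edge joining $B_{\{1\}}$ to $B_{\{2\}}$. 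Take $x$ and $y$ to be the midpoints of the edges $B_{\{1,3\}}$ and $B_{\{2,4\}}$, i.e.\ $x=\tfrac{1}{\sqrt2}(1,0,1,0)$ and $y=\tfrac{1}{\sqrt2}(0,1,0,1)$. Then $\langle x,y\rangle=-\tfrac34>-1$ and no simplex of $N(A)$ contains both, so the hypotheses of \cite[Lemma 9.7]{Moussong} hold. But any path from $x$ to $y$ must cross $B_{\{3,4\}}$, and by the symmetry exchanging the two triangles the geodesic crosses at its midpoint $m=(0,0,1,1)$; since the angle of $B_{\{1,3,4\}}$ at the vertex $B_{\{3\}}$ equals $\tfrac{\pi}{2}$ (and likewise for $B_{\{2,3,4\}}$ at $B_{\{4\}}$), the spherical law of cosines gives $d(x,m)=\cos^{-1}\!\big(\tfrac{\sqrt2}{4}\big)$, hence $d(x,y)=2\cos^{-1}\!\big(\tfrac{\sqrt2}{4}\big)$ and $\cos d(x,y)=2\cdot\tfrac18-1=-\tfrac34=\langle x,y\rangle$. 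Thus $d(x,y)=\cos^{-1}(\langle x,y\rangle)$, so the strict inequality fails; the same computation works with $-\tfrac12$ replaced by any number in $(-1,0)$.

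For the positive direction I would retrace the proof of \cite[Lemma 9.7]{Moussong}. That proof is inductive, and its hypotheses are inherited by link matrices via Lemma~\ref{1.3}; it derives strictness only through applications of the strict part of \cite[Lemma 9.5]{Moussong} to the matrices $lk(K,A)$ encountered while following a geodesic of $N(A)$ across its faces, and this is exactly where the gap sits. By the corrected form of \cite[Lemma 9.5]{Moussong} proved above, its strict conclusion is valid for a matrix that has no principal parabolic submatrix and no principal submatrix of order $\ge 2$ with a zero row or column. The first additional hypothesis supplies the latter for every $lk(K,A)$, since $lk(K,A)$ is itself a link matrix of $A$. For the former I would record the auxiliary fact (Cauchy interlacing) that a parabolic matrix is positive semidefinite with one-dimensional radical, whence if $P$ is parabolic of order $k\ge3$ then $lk(\{j\},P)$ is parabolic of order $k-1$; combining this with $lk(\{j\},lk(K,A))=lk(K\cup\{j\},A)$ and the Schur-complement identity $\det A_{K\cup\{j\}}=\det A_K\cdot[lk(K,A)]_{jj}$ (to see the intermediate submatrices are again positive definite), any principal parabolic submatrix of any $lk(K,A)$, of any order $\ge 2$, forces a principal parabolic submatrix of order $2$ of some link matrix of $A$, which the second additional hypothesis forbids. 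In particular $A$ has no principal parabolic submatrix at all, so Moussong's running hypothesis on $A$ is subsumed, and with the strict version of 9.5 available at every step the argument of \cite{Moussong} goes through and yields $d(x,y)<\cos^{-1}(\langle x,y\rangle)$.

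The main obstacle is bookkeeping rather than a new idea: one must locate in \cite[Chapter 9]{Moussong} every appeal to Lemma~9.5 and confirm that the matrix to which it is applied is a link matrix of $A$ for which the two degeneracies are excluded — i.e.\ that the descent lemma for parabolic matrices together with Lemma~\ref{1.3} genuinely covers all of them, and that no appeal is hidden in the inductive step. A secondary, routine point is the geodesic computation in the counterexample: one should check that the broken path $x\to m\to y$ is a local geodesic at $m$ (equivalently, that the incoming and outgoing directions are antipodal in the length-$2\pi$ circle $lk(m,N(A))$), and that replacing a detour through the second triangle by a segment of the edge $B_{\{3,4\}}$ never lengthens a path, so that $d(x,y)$ really equals $2\cos^{-1}(\tfrac{\sqrt2}{4})$ and not the length of a path through a vertex.
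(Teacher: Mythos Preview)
Your positive direction matches the paper's: both identify that Moussong's proof of Lemma~9.7 fails only where (a) the strict form of Lemma~9.5 is applied to a link matrix $lk(I,A)$, and (b) it is asserted that $lk(I,A)$ inherits the absence of parabolic submatrices from $A$. The paper handles (b) by forward reference to Step~3 of its Lemma~\ref{split}, showing that a principal parabolic submatrix of order $\ge 3$ in a link forces one in $A$; your Cauchy--interlacing/Schur--complement reduction to an order-$2$ parabolic in some link is a clean, self-contained variant of the same descent.

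Your counterexample, however, is genuinely different and more elaborate than the paper's. The paper uses the $3\times 3$ matrix
\[
A=\begin{pmatrix}1&0&-1\\0&1&0\\-1&0&1\end{pmatrix},
\]
whose nerve is a one-dimensional path of two edges of length $\tfrac{\pi}{2}$; taking $x=e_1$ and $y=(0,\sin\phi,\cos\phi)$ with $\phi=\tfrac{\pi}{4}$ gives $d(x,y)=\tfrac{3\pi}{4}=\cos^{-1}(\langle x,y\rangle)$ by inspection, with no link computation needed. Your $4\times 4$ block example also works, but its nerve is two spherical triangles glued along an edge, so you must verify that the broken path through $m$ is a local geodesic (your symmetry $1\!\leftrightarrow\!2,\ 3\!\leftrightarrow\!4$ gives $\alpha+\beta=\pi$ in the length-$2\pi$ link circle at $m$, so this is fine) and then invoke the CAT$(1)$ property of $N(A)$ to promote it to a global geodesic. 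The paper's example avoids this entirely; on the other hand, yours has the minor advantage of simultaneously violating both extra hypotheses (zero row in $lk(\{1\},A)$ and an order-$2$ parabolic in $lk(\{3\},A)$), while the paper exhibits the second failure mode separately via $\begin{pmatrix}1&0&0\\0&1&-1\\0&-1&1\end{pmatrix}$.
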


\begin{proof}
	Once again we first give the counterexample.
	
In this case we consider the matrix $A=\begin{pmatrix}
	1&0&-1\\
	0&1&0\\
	-1&0&1
\end{pmatrix}$. This matrix is not parabolic because the matrix $A_I$ for $I=\{1,3\}$ is not positive definite. Again we use the standard basis $\{e_1,e_2,e_3\}$ of $\mathbb{R}^3$. We can see that the nerve $N(A)$ consists of three points and two edges of length $\frac{\pi}{2}$.
\begin{center}
\begin{tikzpicture}
	\coordinate[label=above: $e_1$] (A) at (0,2);
	\coordinate[label=right: $e_2$] (B) at (2,0);
	\coordinate[label=below: $e_3$] (C) at (-0.4,-0.5);
	
	\draw (A) to[bend left=40] (B); 
	\draw (B) to[bend left=20] (C); 
	\fill (A) circle (2pt);
	\fill (B) circle (2pt);
	\fill (C) circle (2pt);
	
\end{tikzpicture}
\end{center}
We define $x=(1,0,0)$ and $y=(0,\sin\phi,\cos\phi)$ for $\phi \in \left(0,\frac{\pi}{2}\right)$.\\
Let's quickly check that $y\in N(A)$:
\begin{equation*}
\begin{split}
\langle y,y\rangle &=(0,\sin\phi,\cos\phi)A(0,\sin\phi,\cos\phi)^t\\
&=(-\cos\phi,\sin\phi,\cos\phi)\cdot (0,\sin\phi,\cos\phi)^t 
=\sin^2\phi+\cos^2\phi=1
\end{split}
\end{equation*}
Additionally, we have:
$$\langle x,y\rangle =(1,0,-1)\cdot (0,\sin\phi,\cos\phi)=-\cos\phi.$$
Their distance with respect to the intrinsic metric is given by $\frac{\pi}{2}$ plus the length of the arc connecting $e_2$ and $y$ which is given by $\phi$. Now choose the value $\phi=\frac{\pi}{4}$, then we have: 
$$cos^{-1}\left(-\cos\left(\phi\right)\right)=\frac{3\pi}{4}\quad \text{and } \frac{\pi}{2}+\frac{\pi}{4}=\frac{3\pi}{4}$$
So the left side and the right side are equal, contradicting the statement of the Lemma.

There are two problems in the proof. One is that \cite[Lemma 9.5]{Moussong} is applied to $lk(I,A)$, the other one is in the statement that if $A$ does not contain principal parabolic submatrices of order $\geq 3$ then $lk(I,A)$ does not contain any principal parabolic submatrices. This is false, e.g. the matrix $A=\begin{pmatrix}
    1 &0&0 \\
    0&1&-1 \\
    0&-1&1\end{pmatrix}$ has the matrix $\begin{pmatrix}
        1&-1\\
        -1&1
    \end{pmatrix}$ as link matrix. In Step 3 of the proof of Lemma \ref{split} we will see that if $lk(I,A)$ contains a principal parabolic submatrix of order $\geq 3$, then so does $A$. Therefore the only critical cases are if a $2\times2$ principal parabolic submatrix appears and cases in which \cite[Lemma 9.5]{Moussong} can fail, which we excluded.
\end{proof}
We move on to the, in some sense, most critical lemma:
\begin{lemma}\label{9.11}
Let $A$ denote an almost negative matrix. Then the strict inequality \cite[Lemma 9.11]{Moussong} holds if $A$ does not have a link matrix which has a principal submatrix of order $\geq 2$ with a row or column consisting entirely of zeroes and if no link matrix contains a principal parabolic submatrix of order $2$. Moreover there exists a counterexample to the general statement.
\end{lemma}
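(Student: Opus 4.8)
The plan is to follow exactly the same template that worked for Lemmas \ref{1.3}, \ref{9.7} and their predecessors: first exhibit a concrete counterexample to the strict inequality in \cite[Lemma 9.11]{Moussong}, then isolate the single step in Moussong's original argument where strictness can fail, and finally show that under the two hypotheses — no link matrix with a principal submatrix of order $\geq 2$ carrying a zero row or column, and no link matrix with a principal parabolic submatrix of order $2$ — that step goes through. For the counterexample I would reuse the matrix $A=\left(\begin{smallmatrix}1&0&-1\\0&1&0\\-1&0&1\end{smallmatrix}\right)$ from the proof of Lemma \ref{9.7} (or a minor variant of it), since \cite[Lemma 9.11]{Moussong} is downstream of \cite[Lemma 9.7]{Moussong} and the equality $d(x,y)=\cos^{-1}\langle x,y\rangle$ already produced there propagates to the girth estimate: a pair of points realizing equality in the triangle-type inequality of 9.7 assembles into a geodesic loop of length exactly $2\pi$ rather than strictly more, defeating strictness in 9.11. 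I would verify by direct computation, as in the 9.7 argument, that the relevant loop has length precisely $2\pi$.

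Next I would trace through Moussong's proof of \cite[Lemma 9.11]{Moussong}. That proof is an induction on the order of $A$ which, at the inductive step, passes to link matrices $lk(I,A)$ and invokes the strict forms of \cite[Lemma 9.5]{Moussong} and \cite[Lemma 9.7]{Moussong}, together with the claim that strict inequalities are inherited along links provided $A$ has no parabolic principal submatrices of large order. The two failure modes are therefore exactly the ones already identified upstream: (a) a link matrix that triggers the exception in Lemma \ref{1.3}'s counterexample, i.e. one with a principal submatrix of order $\geq 2$ having a zero row or column, and (b) a link matrix with a $2\times 2$ principal parabolic submatrix, which is the new low-order obstruction that the flawed ``parabolic-inheritance'' claim in 9.7 overlooked. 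I would quote Step 3 of the proof of Lemma \ref{split} (referenced in the proof of Lemma \ref{9.7}) for the fact that principal parabolic submatrices of order $\geq 3$ in $lk(I,A)$ pull back to principal parabolic submatrices of $A$, so those cause no trouble once we are already assuming the needed hypotheses recursively. Outside the two excluded cases, the strict forms of 9.5 and 9.7 are available by Lemma \ref{9.7} and the preceding lemma, and Moussong's induction then delivers the strict girth bound verbatim.

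The main obstacle I expect is bookkeeping the recursion cleanly: the hypothesis of Lemma \ref{9.11} is phrased in terms of \emph{all} link matrices of $A$, so I must check that this hypothesis is preserved when passing from $A$ to $lk(I,A)$ — i.e. that a link matrix of a link matrix of $A$ is again (isometric to, via Lemma \ref{1.3}) a link matrix of $A$, which is precisely the relation $lk(I,A)=lk(\{i\},lk(I\setminus\{i\},A))$ recorded in Lemma \ref{1.3}. Granting that, the class of matrices satisfying the hypothesis is closed under taking links, so the induction is well-founded. A secondary subtlety is the base case and the degenerate configurations where a diagonal entry of some link matrix vanishes: here the restriction $|I|\geq 2$ in the definition of parabolic (Remark \ref{remark}) is what keeps the argument honest, and I would handle a vanishing diagonal entry of a link matrix by the same splitting phenomenon that Lemma B provides, reducing to matrices of strictly smaller order. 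With these closure and base-case points settled, the remaining verification that strictness survives is essentially the same computation as in the proofs of the two preceding lemmas and requires no new idea.
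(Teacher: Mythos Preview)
Your proposal has two substantive problems, one in the counterexample and one in the positive direction.

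\textbf{The counterexample.} Lemma 9.11 in Moussong is not a girth statement; it asserts that for $x,y\in St(v,N(A))\setminus Ost(v,N(A))$ with $d(x,y)<\pi$ and not lying in a common simplex of $St(v,N(A))$, one has $d'(x,y)<d(x,y)$, where $d'$ is the intrinsic metric on the suspension $Slk(v,N(A))$. Your description in terms of ``a geodesic loop of length exactly $2\pi$'' suggests you have conflated 9.11 with the later girth estimates (10.1--10.3). More concretely, the $3\times 3$ matrix from Lemma~\ref{9.7} does \emph{not} furnish a counterexample to 9.11: taking $v=e_2$ (the only vertex joined to the other two), the set $St(v,N)\setminus Ost(v,N)$ reduces to the two isolated points $e_1,e_3$, and $d(e_1,e_3)=\pi$, so the hypothesis $d(x,y)<\pi$ is never met. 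The paper therefore passes to the $4\times 4$ matrix
\[
A=\begin{pmatrix}1&0&0&-1\\0&1&0&0\\0&0&1&0\\-1&0&0&1\end{pmatrix},
\]
whose nerve is two spherical triangles glued along an edge; with $v=e_2$ one gets $St(v,N)\setminus Ost(v,N)$ isometric to the $3\times 3$ nerve, and the points $x=e_1$, $y=(0,0,\cos\phi,\sin\phi)$ with $\phi=\pi/4$ now satisfy $d(x,y)=d'(x,y)=3\pi/4<\pi$.

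\textbf{The positive direction.} Your outline (``Moussong's induction then delivers the strict bound verbatim'') skips over the one place where genuine work is required. Moussong's proof of 9.11 splits into two cases; in the second (no simplex contains both $x$ and $y$) he runs an induction on the number of vertices, but the induction \emph{implicitly} relies on the faulty 9.7 to handle the subcase where the geodesic $P$ from $x$ to $y$ lies entirely in $St(v,N(A))$. The paper supplies an explicit argument for this subcase: one first uses \cite[Lemma 9.8]{Moussong} to force $P\subseteq St(v,N)\setminus Ost(v,N)$, then subdivides $P$ by intermediate points $z_1,\dots,z_n$ so that consecutive pairs have inner product $>-1$ and do not lie in a common simplex, and applies the (now valid) strict form of 9.7 to each pair inside $Slk(v,N)$. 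Only after this subcase is handled does the remaining induction step (showing $G\cup H\cup\{v\}$ is a proper subset of the vertex set when $P\not\subseteq St(v,N)$) go through. Your closure-under-links observation via Lemma~\ref{1.3} is correct and needed, but it is not by itself enough; you must also locate and repair this specific subcase.
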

In this case, the original statement was:
\newpage
\begin{ftheo2}
    
	Let $A$ denote an almost negative matrix and suppose there is a vertex $v$ of $N=N(A)$ which is connected to every other vertex of $N(A)$ by an edge. Let $x,y\in St(v,N)\setminus Ost(v,N)$ such that $d(x,y)<\pi$, where $d$ denotes the intrinsic metric of $N(A)$.\\
If $A$ has no principal parabolic submatrices of order $\geq 3$ and no simplex of $St(v,N(A))$ contains both $x$ and $y$, then $d'(x,y)<d(x,y)$ where $d'$ denotes the intrinsic metric of the suspension $Slk(v,N(A))$. 
\end{ftheo2}
\vspace{0.3cm}
Let us first recall the definitions used in this lemma. In this context $St(v, N)$ denotes the star of a vertex, i.e. the union of all cells containing $v$ and $Ost(v,N)$ is the open star, that is, the union of all interiors of cells containing $v$. The suspension $SK$ of a spherical complex $K$ is the join of $K$ with $\mathbb{S}^0$. By \cite[Lemma 8.2 (ii)]{Moussong} it follows that $St(v,N)\subseteq Slk(v,N)$ which is implicitly used in the statement of the Lemma.
\begin{proof} We once again start with the counterexample.
	
		In this case we need a $4\times 4$ matrix to provide a counterexample. This counterexample once again has the ``original'' $2\times 2$ matrix as a link. The matrix is given by 
	$$A=\begin{pmatrix}
		1&0&0&-1\\
		0&1&0&0\\
		0&0&1&0\\
		-1&0&0&1
	\end{pmatrix}$$
	This satisfies the requirements. Once again choosing the standard basis $\{e_1,e_2,e_3,e_4\}$, we can see that the nerve $N(A)$ consists of two equilateral spherical triangles glued together along the edge connecting $e_2$ and $e_3$. All the edges have length $\frac{\pi}{2}$.
	
	\begin{center}
		\begin{tikzpicture}[scale=0.5]
			\coordinate[label=above: {$e_1$}] (A) at (0,4);
			\coordinate[label=right: {$e_2$}] (B) at (3,0);
			\coordinate[label=left: {$e_3$}] (C) at (-3,0);
			\coordinate[label=below: {$e_4$}] (D) at (0,-4);
			
			\fill (A) circle (2pt);
			\fill (B) circle (2pt);
			\fill (C) circle (2pt);
			\fill (D) circle (2pt);
			
			\draw (A) to[bend left=20] (B);
			\draw (A) to[bend right=20] (C);
			\draw (B) to[bend left=20] (C);
			\draw (D) to[bend right=20] (B);
			\draw (D) to[bend left=20] (C);
		\end{tikzpicture}
	\end{center}
	
	Now consider the vertex $v=e_2$. Then we can see that $St(v,N(A))\setminus Ost(v,N(A))$ consists of three points and two edges of lengths $\frac{\pi}{2}$, the precise complex we dealt with in the previous lemma and counterexample with different labeling.
	
	Now we choose ``the same points'', meaning $x=(1,0,0,0)$ and $y=(0,0,\cos\phi,\sin\phi)$ for $\phi=\frac{\pi}{4}$ for example.
	
	The link $lk(v,N(A))$ is the same complex as $St(v,N(A))\setminus Ost(v,N(A))$. The complex $Slk(v,N(A))$ has a special structure, namely we take two copies of $N(A)$ and glue them together along $lk(v,N(A))=St(v,N(A))\setminus Ost(v,N(A))$. Additionally, we can see that $N(A)=Clk(v,N(A)):=1\ast lk(v,N(A))$, where $Clk(v,N(A))$ denotes the spherical cone of $lk(v,N(A))$ which is defined as the spherical join of $lk(v,N(A))$ with the north pole of a sphere.
	
	Therefore, using the definition of the intrinsic metric (or geometry on the sphere), we can see that $d(x,y)=\frac{3\pi}{4}<\pi$. Additionally, the same Lemma and idea tell us that $d'(x,y)=\frac{3\pi}{4}$, which means there distance in the suspension is the same as in the original complex, contradicting the statement of the Lemma.

    The proof in \cite{Moussong} considers two cases. In the first case, there exists a simplex $B$ in $N(A)$ which contains both $x$ and $y$. This case is proven using \cite[Lemma 9.7]{Moussong} and hence works for all cases in which the Lemma holds. In the second case there does not exist a simplex which contains both $x$ and $y$. The original proof uses an induction argument. The induction does not seem to rely on the faulty Lemma at first, however it is used implicitly to deal with the case that the geodesic connecting $x$ and $y$ is contained in $St(v,N(A))$.

    Hence we give a proof for the case that the geodesic connecting $x$ and $y$ is contained in $St(v,N(A))$ relying on \cite[Lemma 9.7]{Moussong} and also explain the rest of the proof for completeness.

    Let $n$ denote the number of vertices of $N(A)$. The Lemma is true for $n=0,1,2$ by elementary calculations. So we assume we have proven the Lemma for all nerves of almost negative matrices with number of vertices strictly smaller than $n$. 

    Let $p\colon [0,d(x,y)]\to N(A)$ denote a geodesic in $N(A)$ connecting $x$ and $y$. Let $P$ denote the image of $p$.\\
    First suppose that $P$ is contained in $St(v,N(A))$. In that case an application of \cite[Lemma 9.8]{Moussong} yields that $P$ is contained in $St(v,N(A))-Ost(v,N(A))$, because else it would have length $\geq \pi$ contradicting $d(x,y)<\pi$. Since $p$ is connected and $\langle .,.\rangle$ is continuous, we can find points $z_1,...,z_n$ for $0< t_1 <...<t_n<d(x,y)$ such that $\langle x,z_1\rangle>-1$, $\langle z_i,z_{i+1}\rangle >-1$ (for all $i\in\{1,...,n-1\}$) and $\langle z_n,y\rangle >-1$. Moreover, we can choose the $z_i$ such that $x$ and $z_1$ don't lie in a common simplex either, since any two points contained in a simplex have distance $<\pi$ and within every simplex, the metric is given by $d(a,b)=\cos^{-1}\langle a,b\rangle$ (compare \cite[Cor. 9.9 and 9.10]{Moussong}). Since $d(x,y)=d(x,z_1)+d(z_1,z_2)+...+d(z_{n-1},z_n)+d(z_n,y)$, we can argue exactly as in case 1 of Moussong's proof (\cite[Lemma 9.11]{Moussong}), that is, apply \cite[Lemma 9.7]{Moussong} to $Slk(v,N(A))$ for each pair of points $(x,z_1)$, $(z_i,z_{i+1})$ and $(z_n,y)$ and obtain the desired (strict) inequality. \\
    Now suppose $p$ is not contained in $St(v,N(A))$. Using the above argument, we can assume that $p(t)\notin St(v,N(A))$ for all $t\in (0,d(x,y))$.
    Let $G$ and $H$ denote the vertex sets of the supports of $x$ and $y$ respectively. We want to argue that the set $G\cup H\cup \{v\}$ is strictly smaller than the vertex set of $N$. Consider $Supp\left(p\left(\varepsilon\right)\right)$ for a very small $\varepsilon>0$. The support of $p(\varepsilon)$ is not a subset of $G\cup H$, since by assumption $G\cup H$ does not span a simplex. Moreover we have $p(\varepsilon)\notin St(v,N(A))$ and hence $Supp\left(p(\varepsilon)\right)\not\subseteq G\cup H\cup \{v\}$, therefore we conclude that $G\cup H \cup \{v\}$ is a proper subset of the vertex set of $N(A)$ and hence the induction hypothesis applies.

    Hence the proof is valid assuming the conditions on $A$ from Lemma \ref{9.7}, which is what we wanted to show.
\end{proof}
Thus, the only cases where the proof of \cite[Thm. B]{Moussong} fails are cases where the cosine matrix of the Coxeter system $(W,S)$ has a problematic link matrix, meaning the cases discussed in Lemma \ref{9.11} 

All the counterexamples stem from the same ``problem matrix'' used as a counterexample for the first lemma, arising as a link matrix. This is how these examples were found and suggests that the approach ``from the bottom up'', i.e. starting with a link matrix and reconstructing the original matrix might be a good approach. In the next chapter we will discuss this in an example before proving the main technical lemma.
\section{The characterization of hyperbolic Coxeter groups}
We will now consider the cases in which the original argument contains a gap. To do so, we will show that even in the potentially problematic cases \cite[Lemma 10.3]{Moussong} holds. This boils down to a technical result about these problematic matrices. 

Before proving the lemma, we will discuss an example in ``both directions" to motivate the upcoming calculations.
\begin{example}
	Let's start with the easier direction. Start with the matrix $$A=\begin{pmatrix}
		1 & 0&0&-1\\
		0&1&0&0\\
		0&0&1&0\\
		-1&0&0&1
	\end{pmatrix}.$$ For $I=\{1,2\}$ we will now calculate $lk(I,A)$:

First we need to understand what $U^\perp$ looks like where $U=\langle e_1,e_2\rangle$. Since $(v_1,v_2,v_3,v_4)\cdot A=(v_1-v_4,v_2,v_3,v_4-v_1)$ for any vector $(v_1,v_2,v_3,v_4)\in \mathbb{R}^4$, we can deduce that $U^\perp=\langle (1,0,0,1), (0,0,1,0)\rangle$. Thus, if $\varphi\colon \mathbb{R}^4\to U^\perp$ denotes the orthogonal projection, we can see that $\varphi(e_3)=e_3$ and $\varphi(e_4)=(1,0,0,1)$ (because $e_1\perp (1,0,0,1)$ and $(1,0,0,1)-e_1=e_4$). Now $(1,0,0,1)\cdot A=(0,0,0,0)$. Thus the link matrix will have a $0$-column, more precisely, an easy calculation shows that $lk(I,A)=\begin{pmatrix}
	1&0\\
	0&0
\end{pmatrix}$ which is the exact matrix we used as a counterexample to \cite[Lemma 9.5]{Moussong}. 

On the other hand we can ask the question what a $3\times 3$ almost negative matrix $A$ needs to look like so that it has this $2\times 2$ matrix as a link matrix. For simplicity we will assume that all diagonal entries are $1$ and we compute $lk(\{1\},A)$. In general $$A=\begin{pmatrix}
	1 & a_{12} & a_{13}\\
	a_{12} & 1 & a_{23}\\
	a_{13} & a_{23} &1
\end{pmatrix}$$ since $A$ is symmetric. First we need to see which vectors are orthogonal to $e_1$. Since the vector space $U^\perp$ is $2$ dimensional, it suffices to find two linearly independent vectors to describe $U^\perp$. For example the vectors $(-a_{12},1,0)$ and $(-a_{13},0,1)$ form a basis of $U^\perp$. Now we need to orthogonally project $\varphi\colon \mathbb{R}^3\to U^\perp$ for the vectors $e_2$ and $e_3$. One way of looking at this is figuring out when $e_2+r\cdot e_1\in U^\perp$ for $r\in \mathbb{R}$, since $e_1$ is orthogonal to $U^\perp$. This is exactly the case when $r=-a_{12}$, or in other words $\varphi(e_2)=(-a_{12},1,0)$. Similarly we obtain $\varphi(e_3)=(-a_{13}, 0 , 1)$. The link matrix then has the form $\begin{pmatrix}
b_{11} & b_{12}\\
b_{12} & b_{22}
\end{pmatrix}$. A straightforward calculation now shows $b_{11}=1-a_{12}^2$, $b_{12}=a_{23}-a_{12}a_{13}$ and $b_{22}=1-a_{13}^2$. 

Suppose we want to obtain $b_{12}=0=b_{22}$, then we need $a_{13}=-1$, since $a_{13}\leq 0$. And additionally we have $0=a_{23}+a_{12}$ or in other words $a_{23}=-a_{12}$ and since both $a_{23}$ and $a_{12}$ are non-positive, we can deduce $a_{12}=0=a_{23}$ and thus 
$$A=\begin{pmatrix}
	1 &0&-1\\
	0&1&0\\
	-1&0&1
\end{pmatrix}$$
which is the matrix used to provide the counterexample to \cite[Lemma 9.7]{Moussong}.
\end{example}
\begin{lemma}\label{split}
	Let $A$ denote an almost negative $n\times n$ matrix and suppose that there exists a link matrix which has a row consisting entirely of zeroes. Then either $A$ is parabolic or, after a permutation of indices, we can write $A$ as a block matrix $A=\begin{pmatrix}
		A_1 & 0\\
		0 &A_2
	\end{pmatrix}$. 
\end{lemma}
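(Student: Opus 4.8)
The plan is to translate the zero-row hypothesis into the statement that the radical of the form $\langle\cdot,\cdot\rangle_A$ contains a nonzero vector with non-negative coordinates, and then to analyse $A$ through the connected components of the graph of its nonzero entries, running a Perron--Frobenius type argument on the irreducible piece.

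First I would fix $I\subseteq\{1,\dots,n\}$ with $A_I$ positive definite, set $J=\{1,\dots,n\}\setminus I$, and take $\phi$ to be the projection onto $U^\perp=\{v:v^TAe_i=0\ \forall\, i\in I\}$ along $U=\langle e_i:i\in I\rangle$ (well-defined since $A_I$ positive definite gives $\mathbb{R}^n=U\oplus U^\perp$). Suppose the row of $lk(I,A)$ indexed by $s\in J$ is zero, i.e.\ $\phi(e_s)^TA\phi(e_t)=0$ for all $t\in J$. Since $\phi(e_s)\in U^\perp$ also annihilates every $e_i$, and since $\{e_i:i\in I\}\cup\{\phi(e_t):t\in J\}$ spans $\mathbb{R}^n$ (because $e_t-\phi(e_t)\in U$), I conclude $\phi(e_s)^TA=0$, so $w:=\phi(e_s)$ is a nonzero element of $\ker A$. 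Writing $\phi(e_s)=e_s-\sum_{i\in I}c_ie_i$ and comparing $I$-coordinates in $Ae_s=A\big(\sum_{i\in I}c_ie_i\big)$ gives $A_Ic=(a_{is})_{i\in I}$, hence $c=A_I^{-1}(a_{is})_{i\in I}$. As $A_I$ is positive definite with non-positive off-diagonal entries, its inverse is entrywise non-negative, and $(a_{is})_{i\in I}\le 0$; therefore $c\le 0$ and $w$ has all coordinates $\ge 0$ (with $w_s=1$).

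Next let $G$ be the graph on $\{1,\dots,n\}$ with an edge $\{j,k\}$ exactly when $a_{jk}\neq 0$. If $G$ is disconnected then, after permuting indices, $A$ is block diagonal and the second alternative of the lemma holds; so assume $G$ connected. From $Aw=0$: if the support $K$ of $w$ were a proper subset, then for $j\notin K$ the relation $0=\sum_{k\in K}a_{jk}w_k$ has all summands $\le 0$, forcing $a_{jk}=0$ for $k\in K$ and contradicting connectedness; hence $w>0$ everywhere. Conjugating $A$ by the positive diagonal matrix with diagonal entries $w_1,\dots,w_n$ --- which changes neither the signature nor the block structure --- I may assume $w=\mathbf{1}$, i.e.\ every row of $A$ sums to $0$; then $a_{jj}=\sum_{k\neq j}|a_{jk}|\ge 0$ and the elementary estimate $2a_{jk}v_jv_k\ge a_{jk}(v_j^2+v_k^2)$ on the off-diagonal terms yields $v^TAv\ge 0$ for all $v$, so $A$ is positive semidefinite. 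A short argument with absolute values and connectedness then shows $\ker A=\mathbb{R}\mathbf{1}$; consequently any $v$ supported on a proper subset with $v^TAv=0$ lies in $\mathbb{R}\mathbf{1}$ and hence vanishes, so every proper principal submatrix of $A$ is positive definite, and together with $\det A=0$ and $n\ge 2$ this says exactly that $A$ is parabolic. (For $n\le 1$ the only possibility is $A=(0)$, dealt with directly.)

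The step I expect to be the main obstacle is the last one: passing, in the irreducible case, from ``$A$ has a non-negative kernel vector'' to ``$A$ is parabolic'', which requires the rescaling to row sums zero to extract positive semidefiniteness and then the verification that $\ker A$ is exactly one-dimensional. The other place needing care is the sign bookkeeping giving $c\le 0$, which relies on the standard fact that a positive definite matrix with non-positive off-diagonal entries (a Stieltjes matrix) has an entrywise non-negative inverse.
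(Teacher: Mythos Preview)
Your proof is correct and takes a genuinely different route from the paper's. The paper proceeds computationally: it first derives the one-step link formula $d_{ij}=c_{ij}-c_{1i}c_{1j}/c_{11}$, then, assuming a zero row in $lk(J,A)$, peels off one index $j\in J$ at a time and uses the sign constraints of almost negativity to show that reducibility of $lk(K,A)$ forces reducibility of $lk(K\setminus\{k\},A)$; the case $\#J=n-1$ is handled separately by showing that a $1\times 1$ zero link forces a parabolic $2\times 2$ link one step back, and then that parabolicity of $lk(\{1\},C)$ implies parabolicity or reducibility of $C$.

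Your argument bypasses this induction entirely by observing at once that a zero row in $lk(I,A)$ means $\phi(e_s)\in\ker A$, and that the Stieltjes property of $A_I$ makes $\phi(e_s)$ entrywise non-negative. The rest is then a clean Perron--Frobenius style analysis of an irreducible almost negative matrix with a non-negative kernel vector: connectedness forces the kernel vector to be strictly positive (which, as you implicitly note, forces $J=\{s\}$), diagonal rescaling reduces to row sums zero, the inequality $2a_{jk}v_jv_k\ge a_{jk}(v_j^2+v_k^2)$ gives positive semidefiniteness, and equality analysis plus connectedness pins down $\ker A=\mathbb{R}\mathbf{1}$ and hence parabolicity. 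This is shorter and more conceptual. What the paper's approach buys in exchange is the intermediate statement, used elsewhere (in the proof of the corrected Lemma~9.7), that a parabolic principal submatrix of order $\ge 3$ in a link forces one in $A$ itself; your method gives the lemma directly but does not isolate that auxiliary fact.
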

If we can (after a permutation of indices) write a matrix $A$ as above, we call $A$ \textit{reducible}.
\begin{proof}
		Suppose we are given a symmetric $m\times m$ matrix $C=(c_{ij})$ with $c_{11}>0$. In this proof $C$ is an arbitrary almost negative matrix and $A$ is the matrix in the Lemma. We prove the Lemma in three steps, first we prove a formula to calculate links. Secondly we deal with the case that a $2\times 2$ or bigger link matrix has a $0$-column and finally we treat the case where a $1\times 1$ link matrix with entry $0$ appears.
		
	\underline{Step 1:} Compute $lk\left({1},C\right)=\left(d_{ij}\right)$:
	
	Start with $U=\langle e_1\rangle$ where $e_1$ denotes the first standard basis vector. Write $U^\perp=\langle u_2,...,u_m\rangle$ where one possible choice for $u_j$ is $u_j=\left(-c_{1j},0,...,0,c_{11},0,...,0\right)$. To compute $\phi_U\left(e_j\right)$ ($j\in\{2,...,m\}$) we therefore need to find the solution to the linear equation $e_j=r\cdot e_1+s\cdot u_j$. This yields $\phi_U\left(e_j\right)=-\frac{c_{1j}}{c_{11}}e_1+e_j$. Then we have for $i,j\in\{2,...,m\}$ and $\langle .,.\rangle$ induced by $C$:
	\begin{equation*}
		\begin{split}
			d_{ij}&=\langle \phi_U\left(e_i\right),\phi_U\left(e_j\right)\rangle=\left\langle -\frac{c_{1i}}{c_{11}}e_1+e_i,-\frac{c_{1j}}{c_{11}}e_1+e_j\right\rangle\\
			&=\frac{c_{1i}c_{1j}}{c_{11}^2}\langle e_1,e_1\rangle -\frac{c_{1i}}{c_{11}}\langle e_1,e_j\rangle -\frac{c_{1j}}{c_{11}}\langle e_i,e_1\rangle +\langle e_i,e_j\rangle\\
			&=\frac{c_{1i}c_{1j}}{c_{11}^2} c_{11}-\frac{c_{1i}}{c_{11}}c_{1j}-\frac{c_{1j}}{c_{11}}c_{1i}+c_{ij}\\
			&=-\frac{c_{1i}c_{1j}}{c_{11}}+c_{ij}
		\end{split}
	\end{equation*}
	This formula will turn out to be very useful for our further analysis since we can compute a link $lk(J,A)$ ``step by step''. In particular we have $lk(J,A)=lk\left(\{j\},lk\left(J\setminus\{j\},A\right)\right)$ for any $j\in J$ (see Lemma \ref{1.3}).
	
	\underline{Step 2:}  $\#J\leq n-2$: We show that $A$ is reducible.
	
	So now suppose that $lk(J,A)=\left(f_{rs}\right)$ for $\#J\leq n-2$ contains a $0$-column. That means there exists an $i\in \{1,...,n\}\setminus J$ such that $f_{ik}=0$ for all $k\in \{1,...,n\}\setminus J$. We choose a $j\in J$ and then take a look at $lk\left(J\setminus\{j\},A\right)=\left(b_{rs}\right)$. Using the formula for the links we can deduce that
	$$0=f_{ik}=-\frac{b_{ji}b_{jk}}{b_{jj}}+b_{ik} \quad\text{ for all }k\in \{1,...,n\}\setminus J.$$
	Since we started with an almost negative matrix, all link-matrices are almost negative, too (Lemma \ref{1.3}). So for $k\notin\{i,j\}$ we can deduce that $b_{ik}=0$ and either $b_{jk}=0$ or $b_{ji}=0$.\\
	If $b_{ji}=0$ then $lk\left(J\setminus\{j\},A\right)$ is reducible for the partition of the index set given by $\{1,...,n\}\setminus \left(J\setminus \{j\}\right) \setminus \{i\}$ and $\{i\}$.\\
	If $b_{jk}=0$ for all $k \notin \{i,j\}$ then $lk\left(J\setminus\{j\},A\right)$ is reducible for the partition of the index set given by $\left(\{1,...,n\}\setminus J\right) \setminus  \{i\}$ and $\{i,j\}$.
	
	Suppose we are given a link $lk(K,A)=\left(p_{ij}\right)$ which is reducible for the index sets $L$ and $M$. For $k\in K$, what does the link $lk\left(K\setminus\{k\},A\right)=\left(q_{ij}\right)$ look like?
	
	Since $lk(K,A)$ is reducible we have:
	$$0=p_{lm}=-\frac{q_{kl}{q_{km}}}{q_{kk}}+q_{lm}$$
	Once again using the almost negative nature of these matrices we can deduce $q_{lm}=0$ for all $l\in L$ and $m\in M$. Additionally, either $q_{kl}=0$ for all $l\in L$ or $q_{km}=0$ for all $m\in M$.
	
	If $q_{km}=0$ for all $m\in M$, then $lk\left(K\setminus \{k\},A\right)$ is reducible for the index sets $L\cup\{k\}$ and $M$.\\
	If on the other hand we have $q_{kl}=0$ for all $l\in L$, then $lk\left(K\setminus \{k\},A\right)$ is reducible for the index sets $L$ and $M\cup \{k\}$.
	
	So iterating the last argument we have proven the claim of the Lemma if some link matrix $lk(J,A)$ contains a $0$-column for $\#J\leq n-2$. To prove the Lemma we only need to show that this is true for $\#J=n-1$ as well.
	
	\underline{Step 3:} $\#J=n-1$. We show that either $A$ is parabolic or reducible.
	
	If $ lk(J,A)=0$ then we consider the matrix $\left(J\setminus \{j\},A\right)=\begin{pmatrix}
		\beta_{jj} &\beta_{j2}\\
		\beta_{j2} &\beta_{22}
	\end{pmatrix}$, where $\beta_{jj}>0$, since the link with regard to $\{j\}$ is defined. Using the formula to compute links we can deduce
	$$0=-\frac{\beta_{j2}^2}{\beta_{jj}}+\beta_{22}\quad \Leftrightarrow \quad \beta_{jj}\beta_{22}=\beta_{j2}^2$$
	So we can conclude that $\beta_{22}\geq 0$. If $\beta_{22}=0$, then $\beta_{j2}=0$ and we are in the case we discussed first because $lk\left(J\setminus \{j\},A\right)$ contains a $0$-column.\\
	If $\beta_{22}>0$ we have
	$$lk\left(J\setminus \{j\},A\right)=\begin{pmatrix}
		\beta_{jj} & -\sqrt{\beta_{jj}\beta_{22}}\\
		-\sqrt{\beta_{jj}\beta_{22}} &\beta_{22}
	\end{pmatrix}.$$
	This matrix is a parabolic matrix, since $\beta_{jj}>0$ and $\beta_{22}>0$ and the determinant $\det( lk\left(J\setminus \{j\},A\right))=0$. 

 We now show in general that given an almost negative matrix $C$ with $c_{11}>0$, if $lk(\{1\},C)$ is parabolic, then $C$ is parabolic or $C$ is reducible.

 So assume $lk(\{1\},C)$ is parabolic. Moreover we assume that both $C$ and $lk(\{1\},C)$ are irreducible, because else we are done by Step 2. Due to \cite[Cor. 9.3]{Moussong}, we know that $C$ is parabolic if its smallest Eigenvalue is $0$ (the same holds for $lk(\{1\},C)$). We consider the vector $w=(w_1,...,w_m)$ and calculate $\langle w,w\rangle_C$. Let $\varphi_{e_1\perp}\colon \mathbb{R}^m\to \mathbb{R}^m$ denote the orthogonal projection onto $e_1^\perp$ with regard to $C$ (this is well-defined since $c_{11}>0$). We can write $w=ke_1+\sum_{j=2}^{m}\alpha_j\varphi_{e_1^\perp} (e_j)$ and since $\varphi_{e_1^\perp}$ is linear, $\alpha_j=w_j$ for $j\in\{2,...,m\}$. Moreover let $\pi_j\colon\mathbb{R}^m\to \mathbb{R}$ denote the canonical projection on the $j$-th coordinate, that is $\pi_j(x_1,...,x_m)=x_j$. We define $\bar{w}:=\left(\pi_2\left(\varphi_{e_1^\perp}(w)\right),...,\pi_m\left(\varphi_{e_1^\perp}(w)\right)\right)$ and obtain:
 \begin{equation*}
     \begin{split}
         \langle w,w\rangle_C
         &=\left\langle ke_1+\sum_{j=2}^{m}w_j\varphi_{e_1^\perp} (e_j), ke_1+\sum_{j=2}^{m}w_j\varphi_{e_1^\perp} (e_j)\right\rangle_C\\
         &= k^2c_{11}+\sum_{j,l=2}^{m} w_jw_l\langle \varphi_{e_1^\perp} (e_j),\varphi_{e_1^\perp} (e_l)\rangle_C\\
         &= k^2c_{11}  + \left\langle \bar{w},\bar{w}\right\rangle_{lk(\{1\},C)}
     \end{split}
 \end{equation*}
Since $c_{11}>0$ and $lk(\{1\},C)$ is parabolic (hence positive semi-definite), both terms in the last equation are non-negative. Hence $\langle w,w\rangle_C\geq 0$ for any vector $w\in \mathbb{R}^m$. Therefore all eigenvalues of $C$ are $\geq 0$ and hence $C$ is either positive definite or parabolic. But we know $C$ can't be positive definite since $lk(\{1\},C)$ is not (this can be seen e.g. geometrically using Proposition \ref{Proposition} (iii)), hence we conclude $C$ is parabolic.

This finishes Step 3 and the proof by applying this argument $\#J$ times to $(0)=lk(J,A)$.

\end{proof}

With this, we can finally close the gap in the proof of \cite[Thm. B]{Moussong}. This is done by proving \cite[Lemma 10.3]{Moussong} in the cases that currently have a gap. For better readability we include the statement here.
\begin{lemma}
    Let $A$ denote an almost negative matrix. If $g(N(A))=2\pi$, then $A$ either has a principal parabolic submatrix of order $\geq 3$ or a reducible submatrix $B=A_1\oplus A_2$ where $A_1$ and $A_2$ are not positive definite.
\end{lemma}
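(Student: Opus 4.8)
The plan is to run the same kind of girth analysis that Moussong uses, but now with full access to Lemma~\ref{split} to handle the degenerate cases that caused the original gap. Concretely: suppose $g(N(A))=2\pi$. If $A$ (or one of its principal submatrices / link matrices) already contains a principal parabolic submatrix of order $\geq 3$, we are done, so assume not. I would then argue that the equality $g(N(A)) = 2\pi$ must already be ``witnessed'' at the level of a small sub-nerve: by the corrected versions of \cite[Lemmas 9.5, 9.7, 9.11]{Moussong} (our Lemmas here), the strict inequalities hold \emph{except} precisely when some link matrix of $A$ has a principal submatrix of order $\geq 2$ that is parabolic or has a zero row/column. Since we have excluded order-$\geq 3$ parabolic principal submatrices (and, by Step~3 of the proof of Lemma~\ref{split}, an order-$\geq 3$ parabolic principal submatrix of a link matrix forces one in $A$ itself), the only obstruction left to the strict inequality is: some link matrix of $A$ has an order-$2$ principal parabolic submatrix, or some link matrix of $A$ has a zero row/column.

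Next I would dispose of the order-$2$ parabolic case. If a link matrix $lk(I,A)$ has an order-$2$ principal parabolic submatrix $lk(I\cup K, A)$ (a $2\times 2$ matrix $\left(\begin{smallmatrix}\beta & -\sqrt{\beta\gamma}\\ -\sqrt{\beta\gamma}&\gamma\end{smallmatrix}\right)$ with $\beta,\gamma>0$), then either $\beta\gamma$ is such that ``pulling back'' one index at a time keeps producing parabolic matrices — and by the argument in the last half of Step~3 of the proof of Lemma~\ref{split} (``if $lk(\{1\},C)$ is parabolic then $C$ is parabolic or reducible''), applied iteratively, this yields either a genuine parabolic matrix of order $\geq 3$ somewhere in the chain back up to a principal submatrix of $A$ (giving option one of the conclusion) or a reducible submatrix. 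In the latter case, and in the ``zero row/column'' case, I invoke Lemma~\ref{split} / Lemma~B directly: a link matrix with a zero row or column means the relevant principal submatrix $B$ of $A$ is reducible, $B = A_1 \oplus A_2$. It then remains to check that the two blocks $A_1,A_2$ can be taken not positive definite — this follows because if one block, say $A_1$, were positive definite, then $lk$ of $B$ with respect to the index set of $A_1$ would just be $A_2$ (the projection being trivial on the $A_2$-coordinates), so the ``bad'' link-matrix phenomenon — the zero row/column, or the order-$2$ parabolic piece — would persist inside $A_2$ alone, and we could replace $B$ by the smaller submatrix $A_2$; iterating, we reach a minimal reducible witness in which neither block is positive definite.

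The remaining piece is to make sure that this minimal reducible witness $B = A_1 \oplus A_2$ with $A_1,A_2$ not positive definite is actually \emph{responsible} for the failure of the strict girth inequality in $N(A)$, i.e. that we have not merely produced a reducible submatrix but one whose two infinite ``factors'' force $g(N(A)) = 2\pi$; here one uses that $N(B) = N(A_1) \ast N(A_2)$ is a spherical join and that a join of two non-empty complexes, neither of which is a single simplex carrying a positive-definite form, has girth exactly $2\pi$ (two antipodal-type points, one in each factor, at distance $\pi$, traversed and returned). Since $N(B)$ embeds isometrically into $N(A)$, this caps $g(N(A))$ at $2\pi$ consistently and identifies $B$ as the desired submatrix.

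The main obstacle I anticipate is the bookkeeping in the second paragraph: tracking, through the chain $lk(J\setminus\{j\},A) \leadsto \cdots \leadsto A$, whether an order-$2$ parabolic link matrix lifts to an order-$\geq 3$ parabolic \emph{principal submatrix} of $A$ or instead to a reducibility — the two alternatives in the conclusion correspond exactly to the two branches ``$C$ parabolic'' vs. ``$C$ reducible'' of the key sublemma in Step~3 of Lemma~\ref{split}, so the argument is really a careful induction peeling off one index at a time, at each stage applying that sublemma. Getting the induction hypothesis phrased so that it simultaneously covers both the zero-row/column case and the order-$2$-parabolic case — and produces a submatrix of $A$, not merely of some link matrix — is the delicate point, but Lemma~\ref{split} is precisely engineered to make each step go through.
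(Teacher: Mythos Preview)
Your approach has a genuine gap: after reducing to the case where some link matrix of $A$ has an order-$2$ parabolic principal submatrix or a zero row/column, you try to reach the conclusion by purely algebraic manipulations (iterating Lemma~\ref{split}), never again using the hypothesis $g(N(A))=2\pi$. This cannot work. Take $A=\begin{pmatrix}1&0&0\\0&1&-1\\0&-1&1\end{pmatrix}$: here $lk(\{1\},A)=\begin{pmatrix}1&-1\\-1&1\end{pmatrix}$ is an order-$2$ parabolic link matrix, yet $A$ has no principal parabolic submatrix of order $\geq 3$, and its only reducible splitting $(1)\oplus\begin{pmatrix}1&-1\\-1&1\end{pmatrix}$ has a positive-definite block. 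So the ``bad link matrix'' condition alone does \emph{not} imply the conclusion of the lemma, and your minimality iteration in the third paragraph collapses exactly here: you pass from $B$ to the non-positive-definite block $A_2$, but $A_2$ is an irreducible order-$2$ parabolic matrix, which is neither of the two desired outcomes. (Your final paragraph, checking that the reducible submatrix ``causes'' girth $2\pi$, is the converse direction and does not help.)

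The paper's proof is essentially different: it runs an induction on the order of $A$ and analyzes an actual closed geodesic $q$ of length $2\pi$ in $N(A)$. In the problematic case it locates a concrete principal submatrix $\begin{pmatrix}1&0&0\\0&1&-1\\0&-1&1\end{pmatrix}$ with corresponding vertices $v_1,v_2,v_3$, and then studies how the image of $q$ meets $St(v_1,N(A))$. If $Q\subseteq St(v_1,N(A))$ a suspension argument together with the induction hypothesis on $lk(v_1,N(A))$ gives a contradiction; if $Q\not\subseteq St(v_1,N(A))$ one finds a second vertex $v'$ so that $Q$ decomposes into two arcs of length exactly $\pi$, forcing all relevant edge lengths to be $\pi/2$ and producing the reducible submatrix on the index set $\{v_1,v'\}\cup\{v_2,v_3\}$ with both $2\times 2$ blocks non-positive-definite. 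The geodesic is what pins down the second non-positive-definite block; your algebraic route has no mechanism to produce it.
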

Before proving the lemma, we want to briefly recall the definition of the girth. The \textit{girth} is the infimum of lengths of closed geodesics or infinity if such geodesics don't exist.
\begin{proof}
    Note that by \cite[Prop. 10.1]{Moussong}, $g(N(A))\geq 2\pi$.

    If $A$ is such that \cite[Lemma 9.11]{Moussong} holds, then the original proof works fine. Thus we can assume that $A$ is an almost negative matrix such that there exists a link matrix $lk(I,A)$ with either a principal submatrix of order $\geq 2$ which has a row or column consisting entirely of zeroes or a $2\times2$ principal parabolic submatrix. Moreover we can assume that $A$ is normalized since multiplication with a diagonal matrix does not change the nerve complex.

     We argue via induction on the order $n$ of $A$. Clearly, if $n=1,2,3$, there is nothing to show. So let $n\geq 4$ and assume $q\colon \mathbb{S}^1\to N(A)$ is a closed geodesic in $N(A)$ of length $2\pi$ (this exists by \cite[Cor. 5.3]{Moussong}). We denote by $Q$ the image of $q$.

    \underline{Case 1:} There exists an $I\subseteq \{1,...,n\}$ such that $lk(I,A)$ contains a principal parabolic $2\times 2$ matrix. 
    
    By Lemma \ref{split} that means either $A$ contains a principal parabolic submatrix of order $\geq 3$ or a principal submatrix of the form $B=\begin{pmatrix}
        1 & 0&0\\
        0&1&-1\\
        0&-1&1
    \end{pmatrix},$ possibly after a permutation of the index set. In the former case we are done, so assume the latter.

    Let $v_1,v_2,v_3$ denote the vertices in $N(A)$ corresponding to $B$. We note that \begin{itemize}
    
        \item The union of all vertex sets of the supports of $q(t)$ is the vertex set of $N(A)$, else we can apply the induction hypothesis to a smaller matrix.
        \item There is a $t\in \mathbb{S}^1$ such that $q(t)\in St(v_1,N(A))-Ost(v_1,N(A))$, because every simplex is geodesically convex and any restriction of $q$ to an arc of length $<\pi$ is a geodesic.
    \end{itemize}
    Hence there exist two points $x,y\in St(v_1,N(A))- Ost(v_1,N(A))$ which lie on $Q$, that is there exist $t_1\neq t_2\in \mathbb{S}^1$ such that $x=q(t_1)$ and $y=q(t_2)$. Thus by \cite[Lemma 9.8]{Moussong} $Q\cap Clk(v_1,N(A))$ contains a portion of length $\pi$ of $Q$. 

    We now consider two subcases.\\
    
    \underline{Subcase 1:} $Q\subseteq St(v_1,N(A))$.\\
    If $Q\subseteq lk(v_1,N(A))$ we can apply the induction hypothesis and Lemma \ref{split} to finish the proof. So suppose that is not the case. Then by \cite[Lemma 8.2]{Moussong}, $St(v_1,N(A))\subseteq Slk(v_1,N(A))$. Moreover due to \cite[Cor. 9.9, 9.10]{Moussong}, $q$ is a closed geodesic in $Slk(v_1,N(A))$. Again by the induction hypothesis and Lemma \ref{split}, there are no closed geodesics of length $2\pi$ in $lk(v_1,N(A))$, so by \cite[Cor. 5.6]{Moussong}, $Q$ passes through both suspension points in $Slk(v_1,N(A))$, which is impossible since one of these points is not contained in $St(v_1,N(A))$. This proves subcase 1.

    \underline{Subcase 2:}  $Q\not\subseteq St(v_1,N(A))$.\\
    Consider the vertex sets of the supports of $x$ and $y$ respectively. If any edge length between $v_1$ and a vertex of such a support is $>\frac{\pi}{2}$, then the length of the segment of $Q$ connecting $x$ and $y$ in $St(v_1,N(A))$ is larger than $\pi$. Since $Q$ is not contained in $St(v_1,N(B))$, there exists a $t'\in \mathbb{S}^1$ and a vertex $v'\notin\{v_1,v_2,v_3\}$ such that $q(t')\in Ost(v',N(A))$. We can repeat the above argument to find $x',y'\in St(v',N(A))-Ost(v',N(A))$ such that the geodesic segment connecting $x'$ and $y'$ in $St(v',N(A))$ has length $\geq \pi$. Moreover, the length of $q$ is greater or equal to the sum of these two segments. Since $l(q)=2\pi$ we can deduce that both arcs have length $\pi$ and $x'\in \{x,y\}$ and $y'\in \{x,y\}$. Hence there are edges of length $\frac{\pi}{2}$ between all vertices in the union of the supports of $x$ and $y$ and $v_1$ and $v'$ respectively. Moreover there is no edge between $v_1$ and $v'$, since else $v_1,v'$ and the vertices of the supports of $x$ and $y$ would span a simplex (since all entires in the matrix are $0$, except the one corresponding to the edge between $v_1$ and $v'$, which is greater than $-1$). And a simplex does not contain a closed geodesic. Moreover, by the first bullet point, $v_2,v_3\in V(Supp(x))\cup V(Supp(y))$. Hence, by construction, the principal submatrices corresponding to the full subcomplexes spanned by $\{v_1,v'\}$ and by $\{v_2,v_3\}$ are not positive definite and ``commute'', meaning the principal submatrix of $A$ corresponding to their indices is reducible. This is what we wanted to show.

    \underline{Case 2:} There exists an $I\subseteq \{1,...,n\}$ such that $lk(I,A)$ contains a principal submatrix of order $\geq 2$ which has a row or column consisting entirely of zeroes. 
    
    Thus, $lk(I,A)$ contains a principal submatrix of the form $\begin{pmatrix}
        0 &0\\
        0& \alpha
    \end{pmatrix}$ for some $\alpha >0$ or a principal submatrix of the form $\begin{pmatrix}
        0 &0 \\
        0& 0
    \end{pmatrix}$ (all after a possible permutation of indices). In the second case, the calculations of the previous Lemma show that at least one diagonal entry of $A$ is zero. In that case we can remove the corresponding row and column to obtain a matrix $A'$ of order $n-1$ with $N(A)=N(A')$. Thus by induction hypothesis, we are done. 
    
    In the other case we deduce that $A$ contains a principal submatrix of the form $\begin{pmatrix}
        1 & -1 &0\\
        -1&1&0\\
        0&0&1
    \end{pmatrix}$ which we dealt with in the first case, which finishes the proof.
\end{proof}
The proof of the characterization of hyperbolic Coxeter groups provided by Moussong relies only on \cite[Cor. 10.2, 10.4]{Moussong}. The proof of the former of these is correct and the proof of the latter relies only on \cite[Lemma 10.3]{Moussong}, for which we have provided a fix. Therefore we have filled the gap in the characterization of hyperbolic Coxeter groups, which we include here for completeness.
\begin{theorem}(\cite[Thm 17.1]{Moussong}, \cite[Thm. 4.2.1]{P})
	Let $(W,S)$ denote a Coxeter system. The following are equivalent:
	\begin{enumerate}
		\item $W$ is Gromov-hyperbolic.
		\item $W$ has no subgroup isomorphic to $\mathbb{Z}\oplus \mathbb{Z}$.
		\item There is no subset $T\subseteq S$ such that $\left(W_T,T\right)$ is an affine Coxeter system of $rk\geq 3$ and \\
		there is no pair of disjoint subsets $T_1,T_2\subseteq S$ such that $W_{T_1}$ and $W_{T_2}$ commute and are infinite.
	\end{enumerate}
\end{theorem}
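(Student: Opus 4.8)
The plan is to prove the three implications $(1)\Rightarrow(2)$, $(2)\Rightarrow(3)$ and $(3)\Rightarrow(1)$ separately; the first two are standard and the third is the only place where the work of this note actually enters. For $(1)\Rightarrow(2)$ I would simply invoke the classical fact that a Gromov-hyperbolic group contains no subgroup isomorphic to $\mathbb{Z}\oplus\mathbb{Z}$: in such a group the centraliser of every infinite-order element is virtually cyclic (see e.g.\ \cite{BH}), which already obstructs $\mathbb{Z}\oplus\mathbb{Z}$. Nothing about Coxeter groups is needed here.

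For $(2)\Rightarrow(3)$ I would argue by contraposition, producing an explicit $\mathbb{Z}\oplus\mathbb{Z}$ whenever (3) fails. If some $W_T$ with $T\subseteq S$ is affine of rank $\geq 3$, then --- using that a rank-$1$ Coxeter system is finite, so that an affine $W_T$ of rank $\geq 3$ is either irreducible with translation lattice $\mathbb{Z}^{\mathrm{rk}-1}$ or a product of irreducible affine pieces each of rank $\geq 2$ --- its translation subgroup is free abelian of rank $\geq 2$, hence contains $\mathbb{Z}\oplus\mathbb{Z}$. If instead $T_1,T_2\subseteq S$ are disjoint with $W_{T_1},W_{T_2}$ infinite and elementwise commuting, then $m_{st}=2$ for all $s\in T_1$, $t\in T_2$, so $W_{T_1\cup T_2}=W_{T_1}\times W_{T_2}$; since every infinite Coxeter group has an element of infinite order, this direct product contains $\mathbb{Z}\oplus\mathbb{Z}$.

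For $(3)\Rightarrow(1)$ I would first rephrase (3) in terms of the cosine matrix $A$ of $(W,S)$. By Remark \ref{remark}(i), a principal parabolic submatrix of $A$ of order $\geq 3$ is the same datum as a subset $T\subseteq S$ with $|T|\geq 3$ and $(W_T,T)$ affine, while a reducible principal submatrix $A_1\oplus A_2$ with neither block positive definite corresponds --- via $a_{ij}=0\Leftrightarrow m_{ij}=2$ and positive definite $\Leftrightarrow$ finite --- to a pair of disjoint $T_1,T_2$ with $W_{T_1},W_{T_2}$ infinite and commuting. Thus (3) says precisely that $A$ carries no such submatrix, and by Lemma \ref{split} together with the observations made in its proof --- that a principal parabolic submatrix of order $\geq 3$, a zero row or column, or a bad reducible submatrix appearing in a link matrix $lk(I,A)$ forces the corresponding phenomenon already in $A$ --- the same exclusion holds for every link matrix $lk(I,A)$ with $A_I$ positive definite. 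This is exactly the hypothesis under which the corrected version of \cite[Lemma 10.3]{Moussong} proved above applies, and it yields the strict inequalities $g(N(A))>2\pi$ and $g(N(lk(I,A)))>2\pi$ for all such $I$; equivalently, \cite[Cor. 10.4]{Moussong} now has a complete proof. With these strict girth bounds in hand I would run the deformation argument sketched in Section 3: since there are only finitely many spherical subsets of $S$ and all the relevant girths lie strictly above $2\pi$, the Moussong polytope on the fundamental domain $|\mathcal{S}|$ can be rebuilt inside hyperbolic space with the central point $p$ at distance $\varepsilon$ from the walls, and for $\varepsilon$ small enough every link of $\Sigma(W,S)$ is still CAT$(1)$ (the hyperbolic links being near-isometric to the Euclidean ones, whose girths stay bounded away from $2\pi$). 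By the Cartan--Hadamard theorem and the link condition for $M_\kappa$-polyhedral complexes, $\Sigma(W,S)$ with this metric is then CAT$(-1)$; as $W$ acts on it properly discontinuously and cocompactly by isometries, $W$ is Gromov-hyperbolic.

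The step I expect to be the main obstacle is the bookkeeping inside $(3)\Rightarrow(1)$: one has to be certain that the purely group-theoretic condition (3) excludes every ``bad'' principal submatrix not only of $A$ itself but of all of its iterated link matrices, since Moussong's induction for the girth estimate descends through links. This is precisely where the original gap was located, and it is closed by Lemma \ref{split} and the corrected \cite[Lemma 10.3]{Moussong}; once the strict girth bounds hold, the replacement of the Euclidean cell shapes by hyperbolic ones is routine and already outlined in Section 3.
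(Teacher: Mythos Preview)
Your proposal is correct and follows precisely the route the paper takes: the paper does not give a self-contained argument for the theorem but simply records that Moussong's original proof rests only on \cite[Cor.~10.2 and 10.4]{Moussong}, and that the latter is restored once \cite[Lemma 10.3]{Moussong} is repaired as in Lemma 4.3. Your sketch of $(1)\Rightarrow(2)$, $(2)\Rightarrow(3)$, and of $(3)\Rightarrow(1)$ via the strict girth inequalities and the hyperbolic deformation of the fundamental domain is exactly the argument of \cite{Moussong} that the paper is invoking; you have merely unpacked it.
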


\end{document}